\titleformat{\section}[hang]{\normalfont\bfseries}{\thesection.}{.5em}{}[]
\titlespacing{\section}{\parindent}{3ex plus .1ex minus .2ex}{1em}
\titleformat{\subsection}[runin]{\normalfont\itshape}{\thesubsection.}{.5em}{}[.]
\titlespacing{\subsection}{\parindent}{3ex plus .1ex minus .2ex}{1em}
\newtheorem{theorem}{Theorem}
\newtheorem{lemma}[theorem]{Lemma}
\newtheorem{proposition}[theorem]{Proposition}
\newtheorem{corollary}[theorem]{Corollary}
\theoremstyle{definition}
\theoremstyle{remark}
\theoremstyle{remark}
\numberwithin{theorem}{section}
\providecommand{\R}{}
\providecommand{\Z}{}
\providecommand{\C}{}
\renewcommand{\R}{\mathbb{R}}
\renewcommand{\Z}{\mathbb{Z}}
\renewcommand{\C}{\mathbb{C}}
\newcommand{\unit}{1\!\!1}
\newcommand{\E}{\textsf{\upshape E}}
\newcommand{\prob}{\textsf{\upshape Pr}}
\newcommand{\norm}[1]{\left\lVert#1\right\rVert}
\newcommand{\wt}[1]{\widetilde{#1}}
\begin{document}

\title{Concentration and central limit theorem for the averaging process on $\Z^{d}$}

\author{Austin Eide\thanks{Department of Mathematics, Toronto Metropolitan University, Toronto, ON, Canada; e-mail: \texttt{austin.eide@torontomu.ca}}}

\date{}

\maketitle

\abstract{In the averaging process on a graph $G = (V, E)$, a random mass distribution $\upeta$ on $V$ is repeatedly updated via transformations of the form $\upeta_{v}, \upeta_{w} \mapsto (\upeta_{v} + \upeta_{w})/2$, with updates made according to independent Poisson clocks associated to the edge set $E$. We study the averaging process when $G$ is the integer lattice $\Z^{d}$. We prove that the process has tight asymptotic concentration around its mean in the $\ell^{1}$ and $\ell^{2}$ norms and use this to prove a central limit theorem. Previous work by Nagahata and Yoshida implies the central limit theorem when $d \geq 3$. Our results extend this to hold for all $d \geq 1$, and our techniques are likely applicable to other processes for which previously only the $d \geq 3$ case was tractable.}

\section{Introduction}

\subsection{Background}

In this paper, we consider the \textit{averaging process} $(\upeta_{t})$, $t \geq 0$, on the infinite integer lattice $\Z^{d}$, where we think of $\Z^{d}$ as a graph with edges $e = \{x,y\}$ given by pairs of vertices $x,y \in \Z^{d}$ with $|x-y| = \sum_{j=1}^{d}|x_{j} - y_{j}|=1$. The process is defined as follows. Each edge is associated with a Poisson process (``clock") of rate $1/2d$, independent of all other edges, and some initial probability distribution $\upeta$ on $\Z^{d}$ is fixed. Set $\upeta_{0} = \upeta$. If the clock on edge $e=\{x,y\}$ rings at time $t$, the vector $\upeta_{t^{-}}$ is updated by assigning $\upeta_{t}^{x},\upeta_{t}^{y}=(\upeta_{t^{-}}^{x} + \upeta_{t^{-}}^{y})/2$ and $\upeta_{t}^{w} = \upeta_{t^{-}}^{w}$ for all $w \neq x,y$. Brushing aside issues of existence for the moment, we remark that the dynamics of the averaging process on $\Z^{d}$ clearly conserve mass, so that $\upeta_{t}$ is a probability distribution for all $t$. The choice of $1/2d$ for the clock rates ensures that each vertex is involved in an average at rate $1$ (for any $d$).

The study of the averaging process (and its close relatives) has a relatively long history. On finite graphs, it dates back at least to a 1974 paper of DeGroot \cite{degroot1974reaching}, who used it as a model of consensus formation among interacting agents. More recently, considerable attention has been paid to the mixing of the averaging process on finite graphs---see \cite{aldous2012lecture}, \cite{caputo2023cutoff} \cite{chatterjee2022phase}, \cite{movassagh2022repeated}, \cite{quattropani2023mixing}, and \cite{spiro2022averaging}. 

The averaging process is an instance of what Liggett terms a \textit{linear system} in \cite{liggett1985interacting}, and the interested reader can find therein many results pertaining to  existence and ergodicity, including some alternative proofs of results in our Section \ref{construction_duality}. The present work is inspired by two papers of Nagahata and Yoshida (\cite{nagahata2009central}, \cite{nagahata2010note}), which establish a central limit theorem for a large class of linear systems (including the averaging process) on $\Z^{d}$ for $d \geq 3.$ Their results rely fundamentally on the transience of the simple random walk on $\Z^{d}$ in these dimensions. Using a refined approach, we are able to show a central limit theorem for the averaging process on $\Z^{d}$ simultaneously for all $d \geq 1$ (Theorem \ref{clt}). We also provide precise concentration estimates (Theorem \ref{two_norm_theorem}) which sharpen some results from (\cite{nagahata2009central}, \cite{nagahata2010note}). While we focus solely on the averaging process here, our methods are generalizable to other linear systems. 

In \cite{ferrari1998fluctuations} the authors consider a model similar to the averaging process which they call RAP---random average process. Our averaging process cannot strictly speaking be realized as an instance of RAP, but could with minor modifications to the definition. In \cite{ferrari1998fluctuations} and a more recent follow-up \cite{fontes2023scaling}, RAP is considered as a model of a dynamic random surface with heights indexed by points in $\Z^{d}$; when the initial condition is a hyperplane through the origin, the former paper establishes the existence of a distributional limit surface for the process, while the latter computes the distribution of each coordinate of the limiting surface. We were not able to easily bear out the relationship of the present work to the results of \cite{ferrari1998fluctuations} and \cite{fontes2023scaling}, but note there is some similarity. 

\subsection{Main results}

We are interested in asymptotic properties of the distribution $\upeta_{t}$. To motivate the main results, we recall a heuristic of Aldous and Lanoue from \cite{aldous2012lecture}. We can approximate the averaging process with a discrete analogue in which the mass of the distribution $\upeta_{t}$ is represented by a finite collection of tokens. When the clock on $\{x,y\}$ rings, the prescribed average can be achieved as follows: each vertex $x,y$ splits its pile of tokens (roughly) in half, chooses one of the halves randomly, and gives the selected half to the other vertex. 

Distinguish a special token by selecting it uniformly at random from all tokens before running the process, and let $W_{t}$ be its position at time $t$. If $\upeta$ is the initial condition, then $W_{0}$ is distributed according to $\upeta$. Given that $W_{t^{-}} = x$, if the clock on $\{x,y\}$ rings at time $t$, the special token moves to $y$ with probability $1/2$ and stays put with probability $1/2$. The dynamics of $(W_{t})$ are then the same as a continuous-time random walk on $\Z^{d}$ which, when at $x$, moves to any of its neighbors at rate $1/4d$, i.e., a continuous-time random walk run at ``half speed". If there are $N$ tokens in total, each performing their own random walks, we expect the proportion on vertex $x$ at time $t$ to be

\begin{equation}\label{heuristic}
	\E[\upeta_{t}^{x}] \approx \frac{\E[\#\{\text{tokens at }x\}]}{N} = \frac{N \cdot \prob_{\upeta}(W_{t} = x)}{N} = \prob_{\upeta}(W_{t} = x)
\end{equation}

where $\prob_{\upeta}(\cdot)$ denotes probability conditioned on $W_{0} \sim \upeta$. By the central limit theorem, for large $t$ the distribution of $W_{t}/\sqrt{t/2}$ is approximately $\mathcal{N}(\bm{0}, I_{d})$, a standard $d$-dimensional Gaussian. We show that the distribution $\upeta_{t}$ also has a Gaussian limit, in the following sense: 

\begin{theorem}\label{clt}
	Let $(\upeta_{t})$ be an averaging process on $\Z^{d}$ started from a probability distribution $\upeta$ on $\Z^{d}$. For any continuous, bounded function $f: \R^{d} \to \R$,
	
	$$
		\sum_{x \in \Z^{d}}f(x/\sqrt{t/2}\,)\upeta_{t}^{x} \xrightarrow{\prob} \int_{\R^{d}} f\,d\upnu
	$$
	
	as $t \to \infty$, where $\upnu$ is the $d$-dimensional standard Gaussian measure and $\xrightarrow{\prob}$ denotes convergence in probability.
\end{theorem}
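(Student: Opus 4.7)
The plan is to split $S_t(f) := \sum_{x \in \Z^d} f(x/\sqrt{t/2})\,\upeta_t^x$ as
$$S_t(f) \,=\, \E[S_t(f)] \,+\, \bigl(S_t(f) - \E[S_t(f)]\bigr),$$
and handle the two pieces separately: the mean via the random-walk duality alluded to by the heuristic of Aldous and Lanoue, and the fluctuation via the concentration estimate in Theorem \ref{two_norm_theorem}. By the triangle inequality, the theorem reduces to showing (i) $\E[S_t(f)] \to \int f\,d\upnu$ and (ii) $S_t(f) - \E[S_t(f)] \to 0$ in probability.

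For (i), I would make the heuristic \eqref{heuristic} rigorous using the duality presumably developed in Section \ref{construction_duality}: $\E[\upeta_t^x] = \sum_y \upeta^y\, p_t(y,x)$, where $p_t(y,x)$ is the transition kernel of the half-speed continuous-time random walk $(W_t)$. Then
$$\E[S_t(f)] \,=\, \sum_{y \in \Z^d} \upeta^y \,\E_y\!\left[f\bigl(W_t/\sqrt{t/2}\,\bigr)\right].$$
For each fixed starting point $y$, the ordinary CLT for continuous-time random walk gives $W_t/\sqrt{t/2} \to \upnu$ in distribution under $\prob_y$, and the bounded continuity of $f$ gives $\E_y[f(W_t/\sqrt{t/2})] \to \int f\,d\upnu$. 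Since $\upeta$ is a probability measure and $|f| \leq \|f\|_\infty$, dominated convergence upgrades this pointwise-in-$y$ convergence to the full claim $\E[S_t(f)] \to \int f\,d\upnu$.

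For (ii), I would use the crude bound
$$\bigl|S_t(f) - \E[S_t(f)]\bigr| \,\leq\, \|f\|_\infty \cdot \bigl\|\upeta_t - \E\upeta_t\bigr\|_1,$$
which reduces matters directly to $\ell^1$ concentration, part of the conclusion of Theorem \ref{two_norm_theorem}. If only $\ell^2$ concentration were available, one would still proceed by splitting the sum at a box $B_R$ of radius $R\sqrt{t}$, using tail estimates (again from random-walk duality, plus Markov for $\upeta_t$) to show the contribution from $B_R^c$ is negligible for large $R$, then applying Cauchy--Schwarz on $B_R$ to get a bound of $\|f\|_\infty |B_R|^{1/2} \|\upeta_t - \E\upeta_t\|_2$; this would require a rate of $o_\P(t^{-d/4})$ on the $\ell^2$ error. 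Either way, the fluctuation vanishes in probability.

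The main obstacle is clearly Theorem \ref{two_norm_theorem} itself; once sufficient concentration is in hand, the CLT is essentially a one-line consequence of the random walk CLT combined with dominated convergence. That the difficulty localizes to the concentration estimate matches the paper's abstract, which emphasizes the concentration bounds as the key innovation extending the CLT below $d \geq 3$.
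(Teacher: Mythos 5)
Your proposal is correct and matches the paper's argument almost exactly: the paper also splits off the fluctuation, bounds it by $\norm{f}_\infty \cdot |\upeta_{t} - \E^{\upeta}[\upeta_{t}]|$ and invokes Theorem \ref{two_norm_theorem}(iii), and handles the mean by writing $\E^{\upeta}[\upeta_{t}^{y}]$ via the heat kernel and applying the random-walk CLT together with dominated convergence over the initial distribution. The only cosmetic difference is that the paper first normalizes $\int f\,d\upnu = 0$, whereas you carry the constant through.
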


In our presentation, Theorem \ref{clt} is a corollary of the following concentration result, in particular item (iii). In the statement, $| \cdot |$ and $\norm{\cdot}$ denote the $\ell^{1}$ and $\ell^{2}$ norms, respectively, on the vector space $\R^{\Z^{d}}$, and $\E^{\upeta}[\cdot]$ denotes expectation conditioned on the initial condition $\upeta_{0} = \upeta$.

\begin{theorem}\label{two_norm_theorem}
	Let $(\upeta_{t})$ be an averaging process on $\Z^{d}$ started from a probability distribution $\upeta$ on $\Z^{d}$. As $t \to \infty$,
	
		\begin{enumerate}[label = (\roman*)]
			
			\item $\E^{\upeta}\left[ \norm{\upeta_{t}}^{2} \right] \leq (1 + o(1))\frac{1}{(2\uppi t)^{d/2}}$
		
			\item $\E^{\upeta}\left[ \norm{\upeta_{t} - \E^{\upeta}[\upeta_{t}]}^{2} \right] = 
				\begin{cases} 
					O\left(\frac{1}{t^{(d+1)/2}} \right) \quad& d \neq 2\\
					O\left(\frac{1}{t\log^{2}t} \right) \quad& d = 2
				\end{cases}$
				
			\item $\E^{\upeta}\left[\,|\upeta_{t} - \E^{\upeta}[\upeta_{t}]|\,\right] = o(1)$
			
		\end{enumerate}
	
	with asymptotic equality holding in (i) when $\upeta$ is a unit mass on a single vertex.
		
\end{theorem}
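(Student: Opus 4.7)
The approach is via the well-known two-particle duality for the averaging process. I would first derive the identity
\[
\E^\upeta[\upeta_t^x\,\upeta_t^y] \;=\; \E^\ast_{(x,y)}\!\bigl[\upeta(X_t)\,\upeta(Y_t)\bigr]
\]
from a direct generator computation, where $(X_t,Y_t)$ is the dual two-particle process: each particle moves as an independent half-speed random walk, except when the two occupy the endpoints of a common edge $e$, in which case firing of $e$ independently reassigns each to an endpoint of $e$ with probability $1/2$. Summing over $x$ and using translation invariance then gives
\[
\E^\upeta[\norm{\upeta_t}^2] \;=\; \sum_{z\in\Z^d} (\upeta \ast \check\upeta)(z)\,\prob^\ast(D_t = z), \qquad D_t := Y_t - X_t,
\]
while the analogous identity with two \emph{independent} half-speed walks yields $\norm{\E^\upeta[\upeta_t]}^2 = \sum_z (\upeta \ast \check\upeta)(z)\,\prob^{\mathrm{ind}}(D_t = z)$.

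For (i), the dynamics of $D_t$ deviate from those of a free rate-one nearest-neighbour walk only in the bounded set $\{|y|_1 \leq 1\}$, so a local CLT (via Fourier analysis on the perturbed difference walk) yields $\prob^\ast(D_t = 0) \sim (2\uppi t)^{-d/2}$, giving asymptotic equality for $\upeta = \delta_0$ and, after a short approximation argument, the upper bound for a general probability distribution $\upeta$.

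For (ii), the variance equals $\sum_z \bigl[\prob^\ast(D_t = z) - \prob^{\mathrm{ind}}(D_t = z)\bigr]\,(\upeta \ast \check\upeta)(z)$. Decomposing the dual generator as $Q^\ast = Q^0 + V$, with $V$ supported in a bounded neighbourhood of the diagonal and having vanishing row sums and first moments, I would apply Duhamel's formula
\[
e^{tQ^\ast} - e^{tQ^0} \;=\; \int_0^t e^{(t-s)Q^0}\,V\,e^{sQ^\ast}\,ds.
\]
Since $V$ annihilates affine functions, $(V e^{sQ^\ast})(y,0) = O(s^{-d/2 - 1})$ for bounded $y$ and large $s$ by the smoothness of the local CLT kernel; combined with $e^{(t-s)Q^0}(0,y) = O((t-s)^{-d/2})$ and a careful splitting of the time integral into small-$s$, bulk, and near-$t$ regimes, this yields $O(t^{-(d+1)/2})$ when $d\neq 2$, while in $d = 2$ the marginal recurrence of the two-dimensional walk produces a logarithmic improvement to $O(1/(t\log^2 t))$.

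Part (iii) then follows from (ii) by a truncation plus Cauchy--Schwarz argument. For a ball $B_r = \{|x|\le r\}\subset\Z^d$,
\[
|\upeta_t - \E^\upeta[\upeta_t]| \;\leq\; \sqrt{|B_r|}\,\norm{\upeta_t - \E^\upeta[\upeta_t]} \;+\; |\upeta_t|_{B_r^c} + |\E^\upeta[\upeta_t]|_{B_r^c};
\]
taking expectations and using $\E|\upeta_t|_{B_r^c} = |\E^\upeta[\upeta_t]|_{B_r^c} = \prob_\upeta(W_t \notin B_r)$, the CLT for $W_t$ makes the tail $o(1)$ whenever $r/\sqrt{t} \to \infty$, while (ii) makes the Cauchy--Schwarz piece $O(r^{d/2}\,t^{-(d+1)/4})$ (respectively $O(r(t\log^2 t)^{-1/2})$ when $d=2$). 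Choosing $r = \sqrt{t\log t}$ then gives $o(1)$ in every dimension. The hard part will be executing (ii), in particular extracting the sharp $\log^{-2}$ decay in $d=2$, which requires careful exploitation of the cancellations built into $V$ together with the precise local-time asymptotics of the two-dimensional difference walk; once (ii) is in hand, (i) and (iii) follow from standard duality, local-CLT, and truncation arguments.
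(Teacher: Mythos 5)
Your duality identity, your reduction to the difference walk $D_t$, and your observation that the perturbation of the difference-walk generator is supported near the origin and annihilates affine functions all match the structure of the paper (Proposition \ref{duality_lemma}, Corollary \ref{two_norm_identity1}, Lemma \ref{difference_lemma}, and the transition rates in \eqref{difference_rates}). Part (iii) is also essentially the paper's argument: truncate and apply Cauchy--Schwarz plus a tail bound for the heat kernel, with the only cosmetic difference being that the paper truncates around each source vertex $x$ via the kernel decomposition \eqref{noisy_heat_kernel}, which makes the uniformity in $x$ automatic.

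Where you genuinely diverge from the paper is the analytic engine for the local-CLT estimate on $\prob_{\bm{0}}(\wt{D}_t = \bm{0})$. You propose Duhamel's formula in the time domain; the paper passes to the discrete-time walk $\wt{X}_n$, writes down Green's functions, derives the exact algebraic relation $\wt{G}(z) = \bigl(1-(1-2z)G(z)\bigr)/\bigl(1-(1-z)^2 G(z)\bigr)$ (Proposition \ref{gf_prop}), imports the singular expansion of $G(z)$ at $z=\pm 1$ from Woess, reads off singular expansions for $\wt{G}(z)$ (Proposition \ref{prw_sing_exp}), and transfers back to continuous time by Poissonization together with the LCLT for the Poisson variable. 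Two cautions about your route. First, your bound $(V e^{sQ^\ast})(y,\bm{0}) = O(s^{-d/2-1})$ requires second-derivative (in the spatial argument) control of the \emph{perturbed} heat kernel $e^{sQ^\ast}$, which is the object you are trying to estimate in the first place; you would need a bootstrap (e.g.\ a rough Nash-type bound followed by iteration of Duhamel, or first-order Duhamel to transfer smoothness from $e^{sQ^0}$), and this is not automatic. Second, and more seriously, you correctly identify $d=2$ as the hard case, but a one-pass Duhamel estimate will not produce the $\log^{-2} t$ factor: for $d=2$ the walk returns to the perturbed region $\Theta(\log t)$ times by time $t$, and the improvement comes from a full resummation of the Born/Dyson series, which in the paper is done in the transform domain by the singular expansion $\wt{G}(z) = \wt{f}_-(-z) + O\bigl(\bm{h}_{0,-1}(-z)\bigr)$ at $z=-1$ together with the coefficient asymptotic $[z^n]\bm{h}_{0,-1}(z) = O\bigl(1/(n\log^2 n)\bigr)$. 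In short, your outline is a plausible alternative, but to make it rigorous you would effectively have to rebuild the resolvent machinery that the Green's-function calculus provides for free; the generating-function route is more than a presentational choice here.
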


An interpretation of the averaging process as a \textit{noisy heat flow} was suggested in \cite{sau2023concentration}. By the heuristic (\ref{heuristic}), in expectation the process evolves as $\E[\upeta_{t}] = A\E[\upeta_{t^{-}}]$ with $A = I - \frac{1}{2}L$ and $L$ the normalized Laplacian of $\Z^{d}$. This is equivalent to $\partial_{t}\E[\upeta_{t}] = -\frac{1}{2}L\E[\upeta_{t}]$, i.e., a heat flow on $\Z^{d}$. The process itself evolves as a stochastic flow $\upeta_{t} = A_{t}\upeta_{t^{-}}$, where $A_{t}$ is a random matrix determined by the Poisson clocks. The matrices $(A_{t})$ can be thought of as highly degenerate versions of $A$ which ``in expectation" equal $A$.

Thus (ii) and (iii) of Theorem \ref{two_norm_theorem} can be understood as measures of the extent to which the noise perturbs $(\upeta_{t})$ from the underlying heat flow. A concentration result of this type was proved in \cite{sau2023concentration} for the averaging process on the discrete torus $\mathbb{T}_{n}^{d}$. In that work, it was observed that the process has tight concentration around its mean long before the diffusive timescale $t = \Uptheta(n^{2})$ of the underlying heat flow, a phenomenon the author terms \textit{early concentration}. This fact is used to prove a hydrodynamic limit for the process on the torus $\mathbb{T}^{d}$. Our Theorem \ref{clt} can be seen as an analogue of this result, where the limit is now a density on $\R^{d}$. 

To see the significance of the concentration statements in Theorem \ref{two_norm_theorem}, suppose for simplicity that we begin the process from $\upeta$ a unit mass on the origin of $\Z^{d}$. The expectation $\E[\upeta_{t}]$ is the time-$t$ distribution of a continuous-time random walk on $\Z^{d}$ with jump rate $1/2$ started from the origin. Fix $r > 0$ and let $\mathcal{B}_{d}(r\sqrt{t})$ be the $\ell^{1}$ ball of radius $r\sqrt{t}$ centered at the origin. Note that $\text{Vol}(\mathcal{B}_{d}(r\sqrt{t})) = \Uptheta(t^{d/2})$ and the coordinates $\E[\upeta_{t}^{v}]$ are (uniformly) $\Uptheta(t^{-d/2})$ on $\mathcal{B}_{d}(r\sqrt{t})$. Consider the rescaled deviations $t^{d/2}\E[\,|\upeta_{t}^{v} - \E[\upeta_{t}^{v}]|\,]$. The average of this quantity over vertices in $\mathcal{B}_{d}(r\sqrt{t})$ is 

$$
	\frac{t^{d/2}}{\text{Vol}(\mathcal{B}_{d}(r\sqrt{t}))}\sum_{v \in \mathcal{B}_{d}(r\sqrt{t})}\E[\,|\upeta_{t}^{v} - \E[\upeta_{t}^{v}]|\,] = o(1)
$$

by (iii), suggesting that a ``typical" vertex in $\mathcal{B}_{d}(r\sqrt{t})$ has $\E[\,|\upeta_{t}^{v} - \E[\upeta_{t}^{v}]|\,] = o(\E[\upeta_{t}^{v}]).$
Thus the concentration is very tight for vertices which lie in the bulk of the distribution. In Section \ref{conclusion}, we give an example which demonstrates that concentration of this type is \textit{not} satisfied in general by processes whose mean trajectory is the heat flow. Giving sufficient conditions for a stochastic flow to have asymptotic concentration at the level of Theorem \ref{two_norm_theorem} is an interesting avenue for future research.

Our proof techniques are similar to those in \cite{sau2023concentration} (and numerous other works in this area) and rely primarily on the relationship between the averaging process and the random walk. The key idea is an extension of (\ref{heuristic}) which relates the joint moments $\E[\upeta_{t}^{x}\upeta_{t}^{y}]$ to pairs of coupled random walks on $\Z^{d}$. This relationship can be heuristically derived by considering the joint motion of two marked tokens. The tokens have dependencies when they occupy the same vertex or adjacent vertices, as in these cases they depend on at least one common edge clock. While these dependencies can be challenging to account for, in the case of the averaging process we are able to use analytic tools (specifically, singularity analysis of generating functions) to get precise enough results to yield the concentration we want.

\subsection{Outline \& notation}

In Section \ref{construction_duality} we give a general construction of the averaging process on graphs of bounded degree and establish its duality relationship with the continuous-time random walk. These results are more or less standard, though we include them here for the sake of completeness. In Section \ref{discrete_rw} we derive the transition rates of the ``two-token" coupled random walk associated to the averaging process on $\Z^{d}$ and establish some basic results about this walk and its discrete-time counterpart. Section \ref{gen_func} is devoted to the derivation of generating functions and singularity analysis. Finally, in Section \ref{proofs} we give proofs for Theorems \ref{clt} and \ref{two_norm_theorem}. Section \ref{conclusion} considers the potential for the techniques used in this paper to be applied to similar processes. There, we give a rough sketch that, when applied to the simple potlach process (aka meteor process) on $\Z^{d}$, our methods again give an exact asymptotic expression for $\E\left[\norm{\upeta_{t} - \E[\upeta_{t}]}^{2}\right]$, but we find that it is $\Uptheta(t^{-d/2})$ when initialized with worst case initial conditions, which is an asymptotically larger scale than in Theorem \ref{two_norm_theorem}.

For vectors $v$ with real entries indexed by a finite or countable index set $\mathcal{X}$, we generally let $v^{x}$ denote the $x$-coordinate of $v$. We write $|v| = \sum_{x \in \mathcal{X}}|v^{x}|$ for the $\ell^{1}$ norm of $v$ and $\norm{v} = \left(\sum_{x \in \mathcal{X}}(v^{x})^{2} \right)^{1/2}$ for the $\ell^{2}$ norm of $v$ (when these sums converge).

We let $\bm{0}$ denote the origin in $\Z^{d}$. For $r \geq 0$, we let 

$$
	\mathcal{B}_{d}(r) = \{x \in \Z^{d}\,:\,|x| \leq r\}
$$

be the $\ell^{1}$ ball of radius $r$ in $\Z^{d}$ centered at $\bm{0}$. When $r$ is an integer, we also define the $\ell^{1}$ sphere

$$
	\mathcal{S}_{d}(r) = \{x \in \Z^{d}\,:\,|x| = r\}.
$$

In the body of the paper we consider random processes in both continuous and discrete time. We reserve the variable $t$ for a continuous time index and $n$ for a discrete time index. Unless stated otherwise, we assume $t$ ranges over the interval $\R_{\geq 0}$ and $n$ over $\Z_{\geq 0}$.

Given a graph $G$ and a probability distribution $\upeta$ on the vertices of $G$, we use the notation $\prob_{\upeta}(\cdot)$ and $\E_{\upeta}[\cdot]$ to denote probabilities and expectations corresponding to a random walk on $G$ conditioned to have initial state distributed as $\upeta$. When $\upeta$ is concentrated on a single vertex $v$, we simply write $\prob_{v}(\cdot)$ and $\E_{v}[\cdot]$. 

If $(\upeta_{t})$ is an averaging process on $G$ with initial state $\upeta_{0} = \upeta$, we write $\prob^{\upeta}(\cdot)$ and $\E^{\upeta}[\cdot]$ to denote probabilities and expectations corresponding to $(\upeta_{t})$. When there is no risk of misunderstanding, we suppress superscripts and subscripts.

Given a probability distribution $\mathcal{D}$, we write $X \sim \mathcal{D}$ to mean that the random variable $X$ is distributed according to $\mathcal{D}$. We use $\text{Ber}(\cdot)$ and $\text{Po}(\cdot)$ to denote the Bernoulli and Poisson distributions, respectively, with their pertinent parameters.

\section{Construction and duality}\label{construction_duality}

Here we give a construction of the averaging process on general graphs satisfying a bounded degree conditoin using a version of the Harris graphical construction (\cite{harris1972nearest},\cite{harris1978additive}), and also establish its duality relationship with the random walk. Our presentation is a direct adaptation of those given in \cite{burdzy2015meteor} and \cite{swart2022course}. An alternative construction via semigroup theory is given in \cite[Chapter IX]{liggett1985interacting}. Still another construction, via the theory of stochastic differential equations, can be undertaken along the lines of \cite{nagahata2010note}. We opt for the graphical construction for the sake of clarity.

Let $V$ be a finite or countably infinite vertex set. We assign a symmetric weight $r_{vw} = r_{wv} \geq 0$ to each pair of distinct vertices $\{v,w\}$ and define the edge set $E$ to be the collection of pairs $\{v,w\}$ such that $r_{vw} > 0$. Then we have a weighted graph $G = (V,E,r)$. Strictly speaking, identifying the edge set $E$ is unnecessary, but we find it useful in some proofs.

We assume the weights are such that 

\begin{equation}\label{bdd_degree}
	\Updelta = \sup_{v \in V}\sum_{w \in V \setminus \{v\}}r_{vw} < \infty
\end{equation}

For each $e \in E$, let $(N_{t}^{e})$ be a rate-$r_{e}$ Poisson process on $\R_{\geq 0}$, independent of all $(N_{t}^{e'})$ with $e' \neq e$. The variable $N_{t}^{e}$ counts the number of clock rings on $e$ up to time $t$. Given an initial probability distribution $\upeta$ on $V$, the averaging process $(\upeta_{t})$ on $G$ is defined in terms of the Poisson processes $\{(N_{t}^{e})\}_{e \in E}$ as follows. Let $\upeta_{0} = \upeta$; for each edge $e = \{v,w\}$, at each $t > 0$ such that $N_{t}^{e} - N_{t^{-}}^{e} > 0$, we set $\upeta_{t}^{v}, \upeta_{t}^{w} = (\upeta_{t^{-}}^{v} + \upeta_{t^{-}}^{w})/2$ (and leave all other coordinates unchanged).

When $V$ is finite, the informal description of $(\upeta_{t})$ above suffices. When $V$ is countably infinite it is not clear \textit{a priori} that things are well-defined. In general, we must keep track of an infinite collection of Poisson processes, which means we will observe accumulation points a.s. But at such points, it is not possible (from a global perspective) to determine a precise order in which to perform the averages. The graphical construction gets around this issue by establishing that, over finite time intervals, the vertices of $G$ can be separated into distinct finite components on which separate averaging processes can be performed independently in a well-defined manner. 

\subsection{Construction}

Existence and uniqueness of the averaging process is established in the next proposition.

\begin{proposition}\label{existence_uniqueness} Let $G = (V, E, r)$ be a weighted graph satisfying (\ref{bdd_degree}). Given an initial distribution $\upeta$ on $V$, there is a unique process $(\upeta_{t})$ with $\upeta_{0} = \upeta$ satisfying the description given above.
\end{proposition}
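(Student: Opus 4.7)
The plan is to follow the standard Harris graphical construction, as indicated in the surrounding text. The core observation is that over a sufficiently short time interval $[0,T]$, the vertex set $V$ decomposes almost surely into finite ``interaction clusters'' on which the process can be built by elementary means and which do not interact during $[0,T]$. Once this is set up, extending to all $t \geq 0$ is a matter of iteration.

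First, I would fix $T > 0$ with $T < 1/\Updelta$ and define $G^{T}$ to be the random subgraph of $G$ consisting of those edges $e$ with $N_{T}^{e} \geq 1$. The key estimate is that for any vertex $v$ the expected number of length-$k$ simple paths in $G^{T}$ starting at $v$ is at most $(\Updelta T)^{k}$: for any fixed simple path $(v=v_{0}, v_{1}, \dots, v_{k})$, the $k$ Poisson processes on its edges are independent, so the probability that each edge rings at least once is $\prod_{i=1}^{k}\bigl(1 - e^{-r_{v_{i-1}v_{i}}T}\bigr) \leq T^{k}\prod_{i=1}^{k} r_{v_{i-1}v_{i}}$, and summing over paths while iterating (\ref{bdd_degree}) gives $(\Updelta T)^{k}$. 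Since the component $C_{v}^{T}$ is infinite iff arbitrarily long simple paths from $v$ exist in $G^{T}$, Markov's inequality yields $\mathbb{P}(|C_{v}^{T}| = \infty) \leq \inf_{k}(\Updelta T)^{k} = 0$, and the same holds simultaneously for all $v$ by a countable union.

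Second, on the event that every component of $G^{T}$ is finite, I would construct $(\upeta_{t})_{t \in [0,T]}$ separately on each component $C$. Because $C$ has finitely many edges and each carries finitely many rings in $[0,T]$, the rings can be ordered $0 < \tau_{1} < \cdots < \tau_{m} \leq T$ and $(\upeta_{t})$ is defined by applying the prescribed average at each $\tau_{j}$ and holding coordinates constant in between. By definition of $G^{T}$, no edge between distinct components rings in $[0,T]$, so the constructions on different components use disjoint clocks and do not interact, giving a well-defined process on all of $V$. Uniqueness on $[0,T]$ is automatic: any process obeying the update rule must perform exactly this finite, totally ordered sequence of averages inside each component and make no update across components.

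Finally, I would iterate: having constructed $(\upeta_{t})$ on $[0,T]$, restart from $\upeta_{T}$ using the Poisson processes on $[T, \infty)$, which by the strong Markov property are again independent rate-$r_{e}$ clocks, to extend to $[T, 2T]$, and so on, yielding a unique process on $[0, \infty)$. The only substantive obstacle is the finite-component estimate in the first step; the bounded-degree hypothesis (\ref{bdd_degree}) is precisely what enables that percolation-type bound. Note that the threshold $T < 1/\Updelta$ is needed only for this single application of the estimate and not for any dynamical reason, so iterating past time $1/\Updelta$ is unproblematic.
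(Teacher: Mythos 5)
Your construction is correct in outline but takes a genuinely different route from the paper. The paper does not iterate over short time windows: it fixes an arbitrary $t$ and vertex $v$, explores \emph{backwards} in time from $(v,t)$ by repeatedly adjoining the vertex responsible for the most recent fresh clock ring, and observes that (since a $k$-vertex set has total outward rate at most $\Updelta k$ by (\ref{bdd_degree})) the inter-exploration times stochastically dominate independent exponentials of rates $\Updelta k$. Because the means $\frac{1}{\Updelta}\sum_k \frac{1}{k}$ diverge while the variances $\frac{1}{\Updelta^2}\sum_k \frac{1}{k^2}$ stay bounded, Chebyshev shows the exploration cannot reach $N$ vertices within any fixed time $t$ once $N$ is large, so the influence set is finite a.s.\ for all $t$ at once. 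Your subcritical-percolation estimate on a single window $T < 1/\Updelta$ followed by Markovian restarts at $T, 2T, \dots$ is the more standard Harris-type construction and is arguably more modular (the one-step estimate is elementary and then iteration is free); the paper's version avoids the iteration at the cost of a more delicate one-shot exploration argument.

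There is a small gap in your percolation step. The implication ``$C_v^T$ infinite $\Rightarrow$ arbitrarily long simple paths from $v$ exist in $G^T$'' is a K\"onig's-lemma statement and requires local finiteness, but (\ref{bdd_degree}) bounds only the \emph{sum} of incident rates, not the \emph{number} of neighbours, so $G$ need not be locally finite (picture an infinite star whose leaf rates sum to $\Updelta$: the component is infinite yet every simple path from the centre has length one). Two one-line repairs: either note that $G^T$ is locally finite a.s., since the expected number of incident edges at a fixed $v$ ringing in $[0,T]$ is at most $T\sum_w r_{vw} \leq \Updelta T < \infty$, and then apply K\"onig's lemma; or, cleaner, bypass long paths entirely and bound $\E\bigl[|C_v^T|\bigr] \leq \sum_{k \geq 0}(\Updelta T)^k = (1 - \Updelta T)^{-1} < \infty$ directly, using that every vertex of $C_v^T$ terminates at least one simple path from $v$. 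With that sentence added, the argument is complete.
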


\begin{proof} 

For a vertex $v \in V$ and $t \geq 0$, we define a $(v,t)$-influence path as a map $\upphi: [0, t] \to V$ satisfying  

	\begin{enumerate}[label=(\roman*)]
		\item $\upphi_{t} = v$, and
		\item for all $s \in [0,t]$, if $\upphi_{s} \neq \upphi_{s^{-}}$, then the clock on edge $\{\upphi_{s^{-}}, w\}$ rings at time $s$ for some $w$.
	\end{enumerate}
	
Let $I_{t}^{v}$ be the set of vertices $w$ such that there is $(v,t)$-influence path $\upphi$ with $\upphi_{0} = w$. Intuitively, the evolution of the coordinate function $\upeta_{s}^{v}$ for $0 \leq s \leq t$ depends only on the coordinates corresponding to vertices in $I_{t}^{v}$.

\textbf{Claim:} Almost surely, $|I_{t}^{v}| < \infty$ for all $v \in V$ and $t \geq 0$. 

Given the claim, we show how to define the coordinate $\upeta_{t}^{v}$ for a fixed $v \in V$ and $t \geq 0$. Let $E_{t}^{v}$ be the set of edges $e$ such that $e$ contains at least one vertex in $I_{t}^{v}$ and the clock on $e$ rings at least once over the interval $[0,t]$. By (\ref{bdd_degree}), we have that, almost surely, for every $t \geq 0$, each $w \in V$ experiences at most finitely many clock rings on its incident edges during $[0,t]$. This fact, combined with the fact that $I_{t}^{v}$ is finite a.s.\ by the claim, implies that $E_{t}^{v}$ is finite a.s., and hence we may enumerate $E_{t}^{v}$, from earliest ring to latest and possibly with repetition, as a finite sequence $e_{1}, e_{2}, \dots, e_{T}$.

Now, for each $e \in E$, define the matrix $A^{e} \in \R^{V \times V}$ by 

\begin{equation}\label{matrix_def}
A^{e}(u,w) = \begin{cases}
\frac{1}{2} \quad& e = \{u,w\}\\
1 \quad& u=w \text{ and }u \not\in e\\
0 \quad& \text{otherwise}
\end{cases}
\end{equation}

Multiplication of a vector in $\R^{V}$ by $A^{e}$ replaces the $u$ and $w$ coordinates with their average and leaves all other coordinates unchanged. Then with $A = A^{e_{T}}A^{e_{T-1}} \cdots A^{e_{1}}$, we define $\upeta_{t}^{v} = (A\upeta)^{v}$. 

This construction works for any $v$ and $t$, and it is straightforward to verify that it gives a well-defined map from $[0, \infty)$ to $\R^{\Z^{d}}$ which adheres to the informal description of the process given above, and moreover is the unique map which does so.

We finish by proving the claim. First, note that it follows from the definition of $(v,t)$-influence path that $I_{s}^{v} \subseteq I_{t}^{v}$ for $s \leq t$. In particular, $I_{t}^{v} \subseteq I_{\lceil t \rceil}^{v}$. Thus it suffices to show that $\prob(|I_{t}^{v}| = \infty) = 0$ for an arbitrary $v \in V$ and $t \in \Z_{\geq 0}$; successive union bounds over all $v \in V$ and $t \in \Z_{\geq 0}$ give the full result.

Define $B_{0} = \{v\}$ and $\uptau_{0} = t$. For $k \geq 1$, given $B_{k-1}$ and $\uptau_{k-1}$, let $\uptau_{k}$ be the time of the latest clock ring before time $\uptau_{k-1}$ which comes on some edge $\{u,w\}$ with $u \in B_{k-1}$ and $w \not\in B_{k-1}$, or let $\uptau_{k} = 0$ if no such ring occurs. Let $B_{k} = B_{k-1} \cup \{w\}$ if $\uptau_{k} > 0$, or $B_{k} = B_{k-1}$ if $\uptau_{k} = 0$. Let $B = \bigcup_{k=0}^{\infty}B_{k}$, and observe that we have the inclusion $I_{t}^{v} \subseteq B$. We will show that $B$ is finite, almost surely.

By our construction, for $k = 0,1,2,\dots$, if $\uptau_{k}  > 0$, then the set $B_{k}$ contains exactly $k+1$ vertices. Using this and (\ref{bdd_degree}), we find that, for as long as $\uptau_{k} > 0$, the increments $\{\uptau_{k-1} - \uptau_{k}\}$, $k = 1,2,3,\dots,$ stochastically dominate a sequence of independent exponential random variables $\{Z_{k}\}$, $k = 1,2,3,\dots$, where $Z_{k}$ has rate $\Updelta k$. Then we have

\begin{eqnarray}
	\prob(|B| \geq N) &\leq& \prob\left(\sum_{k=1}^{N-1}(\uptau_{k-1} - \uptau_{k}) < t \right)\nonumber\\
	&\leq& \prob\left(\sum_{k=1}^{N-1}Z_{k} < t \right) \label{to_bound}
\end{eqnarray}

The sum $\sum_{k=1}^{N-1}Z_{k}$ has mean $\frac{1}{\Updelta}\sum_{k=1}^{N-1}\frac{1}{k} = \upomega(1)$ and variance $\frac{1}{\Updelta^{2}}\sum_{k=1}^{N-1}\frac{1}{k^{2}} = O(1)$ as $N \to \infty$. An application of Chebyshev's inequality then gives that the probability (\ref{to_bound}) is $o(1)$ as $N \to \infty$. In particular, this implies that $B$ is finite almost surely, completing the proof.

\end{proof}

When $(\upeta_{t})$ is started from an initial condition which is a unit mass on vertex $v$, we will use the notation $(\upeta_{t}) = (\upeta_{t}(v,\cdot))$, so that $\upeta_{t}(v,w)$ is the amount of mass at vertex $w$ at time $t$. This notation, which we borrow from \cite{sau2023concentration}, is well-suited to the interpretation of $(\upeta_{t})$ as a noisy heat flow. Indeed, from the construction in the proof of Proposition \ref{existence_uniqueness}, it straightforward to see that we can couple the main averaging process $(\upeta_{t})$ with a collection of averaging processes $\{\upeta_{t}(v,\cdot)\}_{v \in V}$ in such a way that, if the initial condition is $\upeta_{0} = \upeta$, 

\begin{equation}\label{noisy_heat_kernel}
	\upeta_{t}^{w} = \sum_{v \in V}\upeta^{v}\upeta_{t}(v,w) \quad\forall\,w \in V,\, t \geq 0.
\end{equation}

In this sense, we can think of $\upeta_{t}(\cdot, \cdot)$ as a noisy heat kernel. We will use the decomposition (\ref{noisy_heat_kernel}) extensively to reduce the study of general initial conditions to the study of point mass initial conditions. Since the initial condition is implicit in $\upeta_{t}(v,\cdot)$, we do not use superscripts when writing expectations with respect to these processes.

\subsection{Duality with the random walk}

Now we establish the relationship between the averaging process on $G$ and the continuous-time random walk on $G$. Let $B_{1}, B_{2}, B_{3}, \dots$ be a sequence of i.i.d.\ $\text{Ber}(1/2)$ random variables, independent of the edge clocks. We define a continuous-time random walk $(W_{t})$ on $G$ in terms of the edge clocks and the $B_{j}$'s as follows. Suppose that $W_{t} = v$ for some $t \geq 0$, and that the walk has made exactly $j$ jumps thus far. The walk waits for the first clock on some incident edge $e = \{v,w\}$ to ring and then reveals the value of $B_{j+1}$, moving to $w$ if $B_{j+1} = 1$ and staying at $v$ if $B_{j+1} = 0$. 

The process $(W_{t})$ is a continuous-time random walk on $G$ with transition rates given by $v \xrightarrow{r_{vw}/2} w$ for all $v,w \in V$, where $\xrightarrow{r}$ denotes a transition of rate $r$. Since both $(\upeta_{t})$ and $(W_{t})$ are defined in terms of the edge clocks, we may evolve the processes simultaneously as a joint process on the same probability space. We use this joint evolution to prove (\ref{fm}) of Proposition \ref{duality_lemma} below.

To prove (\ref{sm}) of Proposition \ref{duality_lemma}, we use the following extension. Let $B_{1}^{(1)}, B_{2}^{(1)}, B_{3}^{(1)}, \dots$ and $B_{1}^{(2)}, B_{2}^{(2)}, B_{3}^{(2)}, \dots$ be a pair of i.i.d.\ $\text{Ber}(1/2)$ sequences, independent of one another and the edge clocks. Let $(\wt{W}_{t}^{(1)}, \wt{W}_{t}^{(2)})$ be a pair of continuous-time random walks, each with the same distribution as $(W_{t})$, defined in terms of the same edge clocks but such that $(\wt{W}_{t}^{(1)})$ depends on $\{B_{j}^{(1)}\}_{j=1}^{\infty}$ and $(\wt{W}_{t}^{(2)})$ depends on $\{B_{j}^{(2)}\}_{j=1}^{\infty}$. The walks have non-trivial dependence, since whenever $\wt{W}_{t}^{(1)}$ and $\wt{W}_{t}^{(2)}$ are within distance 1 in the graph they depend on at least one common edge clock. At distances greater than 1 apart the walks move independently.

The dynamics of $(\wt{W}_{t}^{(1)}, \wt{W}_{t}^{(2)})$ are given below. Suppose $u,v,w$ are distinct vertices. (Compare with \cite[Section 2.4]{aldous2012lecture}.)

\begin{equation}\label{coupled_rates}
	\begin{aligned}
		(u,v) \xrightarrow{r_{vw}/2}& (u,w)\\
		(u,v) \xrightarrow{r_{uw}/2}& (w,v)\\
		(u,v) \xrightarrow{r_{uv}/4}& (v,v)\\
		(u,v) \xrightarrow{r_{uv}/4}& (u,u)\\
		(u,v) \xrightarrow{r_{uv}/4}& (v,u)\\
		(u,u) \xrightarrow{r_{uv}/4}& (u,v)\\
		(u,u) \xrightarrow{r_{uv}/4}& (v,u)\\
		(u,u) \xrightarrow{r_{uv}/4}& (v,v)
	\end{aligned}
\end{equation}
	
A transition from, say, $(u,v)$ to $(u,u)$ requires i) the clock on $\{u,v\}$ to ring (occurs at rate $r_{uv}$), and ii) upon the clock ring, $\wt{W}_{t}^{(1)}$ must stay at $u$ and $\wt{W}_{t}^{(2)}$ must jump to $u$ (occurs with probability $\frac{1}{2} \cdot \frac{1}{2} = \frac{1}{4}$). The conjunction of these events can be modeled with a single exponential clock of rate $r_{uv}/4$, which explains the transition rate $(u,v) \xrightarrow{r_{uv}/4} (u,u).$ Note the differences between $(\wt{W}_{t}^{(1)}, \wt{W}_{t}^{(2)})$ and a pair of independent walks $(W_{t}^{(1)}, W_{t}^{(2)})$ with the same marginals---the independent walks, for instance, never perform a ``swap" step $(u,v) \to (v,u)$ almost surely, while the coupled walks make this step at a positive rate.

The main duality relationship is below. Given a probability distribution $\upeta$ on $V$, we write $\upeta \otimes \upeta$ for the product distribution on $V \times V$ where each factor is drawn independently from $\upeta$. 

\begin{proposition}\label{duality_lemma}

Let $\upeta$ be a probability distribution on $G$ and $(\upeta_{t})$ be an averaging process on $G$. Let $(W_{t})$ and $(\wt{W}_{t}^{(1)}, \wt{W}_{t}^{(2)})$ be as defined above. Then for all $v \in V$ and $t \geq 0$, 

\begin{equation}\label{fm}
	\E^{\upeta}[\upeta_{t}^{v}] = \prob_{\upeta}(W_{t} = v)
\end{equation}

and for all $v,w \in V$ (not necessarily distinct) and $t \geq 0$,

\begin{equation}\label{sm}
	\E^{\upeta} [\upeta_{t}^{v}\upeta_{t}^{w}]= \prob_{\upeta \otimes \upeta}(\wt{W}_{t}^{(1)} = v,\,\wt{W}_{t}^{(2)} = w).
\end{equation}

\end{proposition}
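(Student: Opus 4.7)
The plan is to prove both identities by expressing the averaging process as a random product of symmetric matrices and then identifying these products with the transition kernels for the random walks, conditioned on the edge clocks.

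First, from the construction in Proposition \ref{existence_uniqueness}, for each fixed $t \geq 0$ and $v \in V$ the coordinate $\upeta_t^v$ is determined a.s.\ by finitely many edge clocks, and we have the representation $\upeta_t = A^{e_N} A^{e_{N-1}} \cdots A^{e_1} \upeta_0$, where $e_1, \ldots, e_N$ enumerates the relevant clock rings in order. Each matrix $A^e$ is symmetric (because $r_{vw} = r_{wv}$), doubly stochastic, and moreover coincides with the one-step transition matrix $T^e$ of the random walk at a ring on $e$: a walker at $u \in e$ moves to either endpoint of $e$ with probability $1/2$, and a walker at $u \notin e$ stays put. This single observation drives both parts of the proposition.

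For (\ref{fm}), I would condition on the edge clocks. These fix the sequence $(e_j)_{j=1}^N$ and hence, for a walk $(W_t)$ started from $u$, determine the conditional law $\prob_u(W_t = \cdot \,|\, \text{clocks}) = \delta_u^{\top} T^{e_1} \cdots T^{e_N}$. Therefore
\[
\prob_u(W_t = v \,|\, \text{clocks}) = (T^{e_1} \cdots T^{e_N})(u,v) = (A^{e_N} \cdots A^{e_1})(v,u) = \upeta_t(u,v),
\]
where the middle equality uses symmetry of each $A^e$. Taking expectations over the clocks yields $\E[\upeta_t(u,v)] = \prob_u(W_t = v)$, and linearity of $\upeta_t$ in $\upeta_0$ via (\ref{noisy_heat_kernel}) extends this to arbitrary $\upeta$.

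For (\ref{sm}), the key new input is that $(\wt W^{(1)}, \wt W^{(2)})$ share the same edge clocks but are driven by \emph{independent} coin sequences $\{B_j^{(1)}\}$ and $\{B_j^{(2)}\}$. Conditional on the clocks the two walks are therefore independent, with marginals given by the computation above, so that started from $(x,y)$
\[
\prob_{x,y}\bigl(\wt W_t^{(1)} = v,\, \wt W_t^{(2)} = w \,\bigm|\, \text{clocks}\bigr) = \upeta_t(x,v)\, \upeta_t(y,w).
\]
Taking expectations and then summing against $\upeta^x \upeta^y$ via (\ref{noisy_heat_kernel}) yields (\ref{sm}). The main obstacle is simply the bookkeeping needed to justify the matrix-product representation on the infinite graph: for each fixed $(v,w)$ and $t$, one must restrict to a finite sub-graph carrying all the relevant edge clocks. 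This is a.s.\ possible by the finiteness of the backward influence sets $I_t^v$ established in Proposition \ref{existence_uniqueness}. Once this localization is in place, both identities drop out transparently from the conditional independence of the coupled walks given the edge clocks.
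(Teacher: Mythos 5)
Your proposal hits the two ideas the paper's proof is built on: (1) the stronger conditional statement $\prob(W_t = v \mid \mathcal F_t) = \upeta_t^v$ a.s.\ (your matrix-product identity $(T^{e_1}\cdots T^{e_N})(u,v) = \upeta_t(u,v)$ is exactly this, with $\mathcal F_t$ the sigma-algebra of the clocks), and (2) the observation that $\wt W_t^{(1)}, \wt W_t^{(2)}$ are conditionally independent given the clocks because they use disjoint coin sequences, so the second-moment identity factors into a product of first-moment identities. These are the same two pillars the paper uses, and the symmetry manipulation you do with $A^e = T^e$ is the right one.

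However, the step you dismiss as ``bookkeeping'' is actually the bulk of the paper's proof, and as written your argument for (\ref{fm}) has a gap there. You assert $\prob_u(W_t = v \mid \text{clocks}) = (T^{e_1}\cdots T^{e_N})(u,v)$, where $e_1,\dots,e_N$ are the rings on $E_t^v$. But the real walk $(W_s)$ responds to \emph{every} ring on an edge incident to its current position, not just rings in $E_t^v$, so conditioned on the clocks its time-$t$ law on the infinite graph is not a finite matrix product. To make your identity rigorous one must argue that restricting attention to $E_t^v$ does not change the conditional probability of the single event $\{W_t = v\}$: any trajectory that reaches $v$ at time $t$ is a $(v,t)$-influence path, hence stays in $I_t^v$ and only ever responds to rings on $E_t^v$; and any trajectory that leaves $I_t^v$ cannot reach $v$ at time $t$. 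The paper formalizes this by introducing a modified walk $(\widehat W_s)$ that only jumps at rings on $E_t^v$, proves the matrix-product formula for $(\widehat W_s)$, and then exhibits a coupling showing $\{W_t = v\} = \{\widehat W_t = v\}$ a.s.\ given $\mathcal F_t$. Your proof needs some version of this coupling argument; since (\ref{sm}) in your scheme is derived from (\ref{fm}) applied twice, the same gap propagates there.
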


\begin{proof}

For clarity of presentation we will assume throughout the proof that the initial distributions $W_{0}$ and $(\wt{W}_{0}^{(1)}, \wt{W}_{0}^{(2)})$ are as given in (\ref{fm}) and (\ref{sm}), and don't include this information explicitly in the notation.

Let $(\mathcal{F}_{t})$, $t \geq 0$, be the filtration associated to the collection of edge clocks. We will show that for all $v,w \in V$ and $t \geq 0$, we have

\begin{equation}\label{fm_conditional}
\prob(W_{t} = v\,|\,\mathcal{F}_{t})=\upeta_{t}^{v} \quad\text{a.s.}
\end{equation}

and

\begin{equation}\label{sm_conditional}
\prob(\wt{W}_{t}^{(1)} = v,\,\wt{W}_{t}^{(2)} = w\,|\,\mathcal{F}_{t}) = \upeta_{t}^{v}\upeta_{t}^{w} \quad\text{a.s.}
\end{equation}

Applying expectations to both sides of (\ref{fm_conditional}) and (\ref{sm_conditional}) then gives (\ref{fm}) and (\ref{sm}), respectively. In fact, it will suffice to show (\ref{fm_conditional}), since it implies (\ref{sm_conditional}). This is due to the fact that $\wt{W}_{t}^{(1)}$ and $\wt{W}_{t}^{(2)}$ are conditionally independent given $\mathcal{F}_{t}$, that is,

$$
	\prob(\wt{W}_{t}^{(1)} = v,\,\wt{W}_{t}^{(2)} = w\,|\,\mathcal{F}_{t}) = \prob(\wt{W}_{t}^{(1)} = v\,|\,\mathcal{F}_{t})\cdot\prob(\wt{W}_{t}^{(2)} = w\,|\,\mathcal{F}_{t}).
$$

Since $(\wt{W}_{t}^{(i)})$ has the same distribution as $(W_{t})$ for each $i=1,2$, the right-hand side above is $\upeta_{t}^{v}\upeta_{t}^{w}$ a.s.\ by (\ref{fm_conditional})

Before proving (\ref{fm_conditional}), we briefly recall how $\upeta_{t}^{v}$ is defined for a given $v \in V$ and $t \geq 0$. In the proof of Proposition \ref{existence_uniqueness}, we show that $I_{t}^{v}$, the set of vertices in $G$ which lie on some $(v,t)$-influence path, is finite, almost surely. The set of relevant edges $E_{t}^{v}$ is then finite, and we may enumerate these edges, from earliest to latest ring, as a sequence $e_{1},e_{2},\dots,e_{T}$, possibly with repetition. We then define $\upeta_{t}^{v} = (A\upeta)^{v}$, where $A = A^{e_{T}}A^{e_{T-1}} \cdots A^{e_{1}}$ and the matrices $A^{e}$ are as defined in (\ref{matrix_def}).

Now, let $v \in V$ and $t \geq 0$ be fixed. We will define a modified walk $(\widehat{W}_{s})$ in the conditional probability space obtained by conditioning on $\mathcal{F}_{t}$. The modified walk has the same initial distribution and dynamics as $(W_{s})$, but $(\widehat{W}_{s})$ can only jump upon rings on edges in $E_{t}^{v}$; all other edge rings are disregarded. (The indicator variables $\unit\{e \in E_{t}^{v}\}$ are $\mathcal{F}_{t}$-measurable, hence the set $E_{t}^{v}$ is known after conditioning on $\mathcal{F}_{t}$ and this is a legitimate definition of $(\widehat{W}_{s})$.) 

The definition of $(\widehat{W}_{s})$ depends on a fixed vertex $v$, but we omit this from the notation for clarity. For consistency, we write $\prob(\widehat{W}_{s} = w\,|\,\mathcal{F}_{t})$ for the time-$s$ distribution of the modified walk, even though the walk is only defined in the $\mathcal{F}_{t}$-conditional probability space, so the notation is technically redundant.

Item (\ref{fm_conditional}) clearly follows from the following two statements:

	\begin{enumerate}[label = (\roman*)]
		\item $\prob(\widehat{W}_{t} = v\,|\,\mathcal{F}_{t}) = \upeta_{t}^{v}$ a.s., and 
		\item $\prob(W_{t} = v\,|\,\mathcal{F}_{t}) = \prob(\widehat{W}_{t} = v\,|\,\mathcal{F}_{t})$ a.s.
	\end{enumerate}

We begin with (i). Let $e_{1},e_{2},\dots,e_{T}$ be the sequence of relevant edges. Let $t_{0} =0$ and define the times $t_{0} < t_{1} < t_{2} < \cdots < t_{T} <  t$ such that the ring corresponding to $e_{j}$ occurs at time $t_{j}$ for $j = 1,\dots,T$. For any $j$, for each $w \in V$ we have that $\prob(\widehat{W}_{s} =w\,|\,\mathcal{F}_{t})$ is constant over $s \in [t_{j-1}, t_{j})$, since either no rings occur during this span (in the case of $j=1$), or exactly one ring occurs during this span, at $t_{j-1}$ (in the case $j \geq 2$). Further, $\prob(\widehat{W}_{s} = w\,|\,\mathcal{F}_{t}) = \prob(\widehat{W}_{t_{T}} = w\,|\,\mathcal{F}_{t})$ for all $s \in [t_{T}, t]$, since the only ring during this span is at $t_{T}$. 

Thus, the distribution of $(\widehat{W}_{s})$ has a discrete evolution over the interval $[0,t]$ with exactly $T$ jumps. We realize this evolution as a sequence of distributions $\upzeta_{0},\upzeta_{1},\dots,\upzeta_{T} \in \R^{V}$, with $\upzeta_{j}$ defined by $\upzeta_{j}^{w} = \prob(\widehat{W}_{t_{j}} = w\,|\,\mathcal{F}_{t})$ for each $w \in V$. We have $\upzeta_{0} = \upeta$ since $\widehat{W}_{0}$ is distributed according to $\upeta$. We claim that $\upzeta_{j} = A^{e_{j}}\upzeta_{j-1}$ for each $j = 1, \dots, T$. Indeed, we see

$$
\upzeta_{j}^{w} = \begin{cases} \frac{1}{2}\upzeta_{j-1}^{u} + \frac{1}{2}\upzeta_{j-1}^{w} \quad&\text{if }e_{j} = \{u,w\} \text{ for some }u \\ \upzeta_{j-1}^{w}\quad&\text{otherwise} \end{cases}
$$

which is equivalent to $\upzeta_{j} = A^{e_{j}}\upzeta_{j-1}$. It follows that 

$$
	\upzeta_{T} = A^{e_{T}}A^{e_{T-1}} \cdots A^{e_{1}}\upzeta_{0} = A^{e_{T}}A^{e_{T-1}} \cdots A^{e_{1}}\upeta 
$$ 

and in particular $\upzeta_{T} = \prob(\widehat{W}_{t} = v\,|\,\mathcal{F}_{t}) = (A\upeta)^{v} = \upeta_{t}^{v}$, proving (i).

For (ii), we exhibit a coupling of the walks $(W_{s})$ and $(\widehat{W}_{s})$ in the $\mathcal{F}_{t}$-conditional probability space so that $W_{t} = v$ if and only if $\widehat{W}_{t} = v$. Note that if $W_{s} \in V \setminus I_{t}^{v}$ for any $0 \leq s \leq t$, then $\prob(W_{t} = v\,|\,\mathcal{F}_{t}) = 0$, and likewise for the modified walk $(\widehat{W}_{s})$, due to the fact that any trajectory of either walk which ends at $v$ is necessarily a $(v,t)$-influence path. Furthermore, if $W_{0} = \widehat{W}_{0} = w \in I_{t}^{v}$, then it is easy to see that the walks can be coupled so that $W_{s} = \widehat{W}_{s}$ for as long as they stay within $I_{t}^{v}$---simply use the same ``stay-or-go" decisions for both walks.

Since $W_{0}$ and $\widehat{W}_{0}$ are both distributed according to $\upeta$, we may couple the initial states so that $W_{0} = \widehat{W}_{0} = w$. If $w \in I_{t}^{v}$, then we run the coupling described above, until either time $t$ or until the walks leave $I_{t}^{v}$. If $w \not\in I_{t}^{v}$, we impose no coupling on the walks, but remark that in this case neither walk can be at $v$ at time $t$. In this way, we have that $W_{t} = v$ if and only if $\widehat{W}_{t} = v$, and the proof is complete. 
\end{proof}

The proof of Proposition \ref{duality_lemma} in fact gives a bit more: by summing both sides of (\ref{fm_conditional}) over $V$, we get that for any $t \geq 0$

\begin{equation}\label{conservative}
	\sum_{v \in V}\upeta_{t}^{v} = 1\,\quad\text{a.s.}
\end{equation}

That is, the process is \textit{conservative}. While the informal description of the dynamics of $(\upeta_{t})$ suggest as much, it is perhaps not immediately clear from the construction given in the proof of Proposition \ref{existence_uniqueness}. In particular, when $(\upeta_{t})$ is started from a probability distribution, (\ref{conservative}) ensures that the infinite series defining quantities such as $\norm{\upeta_{t}}^{2}$ and $\norm{\upeta_{t} - \E^{\upeta}[\upeta_{t}}^{2}$ converge a.s., which we use implicitly in Corollary \ref{two_norm_identity1} below. This corollary gives the fundamental relationship between the $\ell^{2}$ concentration of the averaging process on $G$ and the random walk on $G$.

\begin{corollary}\label{two_norm_identity1} Let $\upeta$, $(\upeta_{t})$, and $(W_{t})$ be as above. Let $(W_{t}^{(1)}, W_{t}^{(2)})$ be a pair of independent copies of $(W_{t})$, and let $(\wt{W}_{t}^{(1)}, \wt{W}_{t}^{(2)})$ be a pair of copies of $(W_{t})$ coupled according to (\ref{coupled_rates}). Then for all $t \geq 0$,

	\begin{enumerate}[label = (\roman*)]
	
		\item $\E^{\upeta}\left[\norm{\upeta_{t}}^{2} \right] = \prob_{\upeta \otimes \upeta}(\wt{W}_{t}^{(1)} = \wt{W}_{t}^{(2)})$,
		
		\item $\norm{\E^{\upeta}[\upeta_{t}]}^{2} = \prob_{\upeta \otimes \upeta}(W_{t}^{(1)} = W_{t}^{(2)})$, and
	
		\item $\E^{\upeta}\left[\norm{\upeta_{t} -\E^{\upeta}[\upeta_{t}]}^{2}\right] = \prob_{\upeta \otimes \upeta}(\wt{W}_{t}^{(1)} = \wt{W}_{t}^{(2)}) - \prob_{\upeta \otimes \upeta}(W_{t}^{(1)} = W_{t}^{(2)}).$
	
	\end{enumerate}
	
\end{corollary}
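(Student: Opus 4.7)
The plan is to derive all three identities directly from the first and second moment duality relations (\ref{fm}) and (\ref{sm}) of Proposition \ref{duality_lemma}, together with the conservative property (\ref{conservative}). The only nontrivial issue is justifying the exchange of infinite sums and expectations, which is where (\ref{conservative}) is essential.

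For (i), I would write
\[
	\norm{\upeta_{t}}^{2} = \sum_{v \in V}(\upeta_{t}^{v})^{2},
\]
apply Tonelli's theorem to move the expectation inside (the summands are nonnegative, and by (\ref{conservative}) we have $(\upeta_{t}^{v})^{2} \leq \upeta_{t}^{v}$ so that $\sum_{v}(\upeta_{t}^{v})^{2} \leq 1$ a.s., giving absolute integrability), and then invoke the diagonal case $v = w$ of (\ref{sm}):
\[
	\E^{\upeta}\left[\norm{\upeta_{t}}^{2}\right] = \sum_{v \in V}\prob_{\upeta \otimes \upeta}\bigl(\wt{W}_{t}^{(1)} = v,\, \wt{W}_{t}^{(2)} = v\bigr) = \prob_{\upeta \otimes \upeta}\bigl(\wt{W}_{t}^{(1)} = \wt{W}_{t}^{(2)}\bigr).
\]

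For (ii), I would apply (\ref{fm}) coordinate-wise and recognize the resulting sum as a collision probability for the independent pair:
\[
	\norm{\E^{\upeta}[\upeta_{t}]}^{2} = \sum_{v \in V}\prob_{\upeta}(W_{t} = v)^{2} = \sum_{v \in V}\prob_{\upeta \otimes \upeta}\bigl(W_{t}^{(1)} = v,\, W_{t}^{(2)} = v\bigr) = \prob_{\upeta \otimes \upeta}\bigl(W_{t}^{(1)} = W_{t}^{(2)}\bigr),
\]
using independence of $W_{t}^{(1)}$ and $W_{t}^{(2)}$ in the middle step.

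For (iii), I would use the standard variance decomposition for a random element of $\ell^{2}$: since $\E^{\upeta}\bigl[\langle \upeta_{t}, \E^{\upeta}[\upeta_{t}]\rangle\bigr] = \langle \E^{\upeta}[\upeta_{t}], \E^{\upeta}[\upeta_{t}]\rangle = \norm{\E^{\upeta}[\upeta_{t}]}^{2}$ (justified once more by Tonelli, using $|\upeta_{t}^{v} \E^{\upeta}[\upeta_{t}^{v}]| \leq \upeta_{t}^{v}$ a.s.), expanding the square gives
\[
	\E^{\upeta}\left[\norm{\upeta_{t} - \E^{\upeta}[\upeta_{t}]}^{2}\right] = \E^{\upeta}\left[\norm{\upeta_{t}}^{2}\right] - \norm{\E^{\upeta}[\upeta_{t}]}^{2},
\]
and substituting (i) and (ii) yields (iii).

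The main ``obstacle" is really just the bookkeeping around interchanging sums and expectations on the infinite lattice, and this is handled cleanly by the observation that $\upeta_{t}$ remains a probability vector almost surely by (\ref{conservative}), so every sum encountered is bounded by $1$ termwise against $\upeta_{t}^{v}$ or pointwise by $1$ in total. Beyond this, the corollary is a direct translation of Proposition \ref{duality_lemma} into the language of $\ell^{2}$ norms.
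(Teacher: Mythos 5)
Your proof is correct and follows essentially the same route as the paper: apply the moment identities of Proposition \ref{duality_lemma} coordinate-wise, sum the diagonal entries to obtain collision probabilities, and deduce (iii) from the variance decomposition $\E\norm{\upeta_t - \E\upeta_t}^2 = \E\norm{\upeta_t}^2 - \norm{\E\upeta_t}^2$. The only cosmetic difference is that you invoke Tonelli where the paper invokes dominated convergence (justified by the same bound coming from (\ref{conservative})); both work.
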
 

\begin{proof}

	We have
	
	$$
		\norm{\E^{\upeta}[\upeta_{t}]}^{2} = \sum_{x \in Z^{d}}(\E^{\upeta}[\upeta_{t}^{v}])^{2}
	$$
	
	and
	
	$$
		\E^{\upeta}\left[\norm{\upeta_{t}}^{2} \right] = \E^{\upeta}\left[ \sum_{x \in \Z^{d}}(\upeta_{t}^{v})^{2}\right] = \sum_{x \in \Z^{d}}\E^{\upeta}[(\upeta_{t}^{v})^{2}]
	$$
	
	where in the latter we use the dominated convergence theorem to interchange expectation and sum. These are, respectively, $\prob_{\upeta \otimes \upeta}(W_{t}^{(1)} = W_{t}^{(2)})$ and $ \prob_{\upeta \otimes \upeta}(\wt{W}_{t}^{(1)} = \wt{W}_{t}^{(2)})$ by Proposition \ref{duality_lemma}, establishing (i) and (ii). A simple computation gives
	
	$$
		\E^{\upeta}\left[\norm{\upeta_{t} -\E^{\upeta}[\upeta_{t}]}^{2}\right] = \E^{\upeta}\left[\norm{\upeta_{t}}^{2} \right] - \norm{\E^{\upeta}[\upeta_{t}]}^{2}
	$$
	
	so that (iii) follows from (i) and (ii).
	
\end{proof}

\section{Random walks on $\Z^{d}$}\label{discrete_rw}

Henceforth, we equip the edges of $\Z^{d}$ with independent Poisson clocks of rate $1/2d$, and let $(\upeta_{t})$ be the corresponding averaging process. Let $(W_{t})$ be a continuous-time random walk on $\Z^{d}$ with transition rates given by $x \xrightarrow{1/4d} y$ for all pairs $x,y$ with $|x-y| = 1$ (and rate $0$ otherwise). Let $(W_{t}^{(1)}, W_{t}^{(2)})$ be a pair of independent copies of $(W_{t})$, and let $(\wt{W}_{t}^{(1)}, \wt{W}_{t}^{(2)})$ be a pair of copies of $(W_{t})$ coupled according to the rates given in (\ref{coupled_rates}). 

We define the \textit{heat kernel} $(h_{t})$ for $t \geq 0$ by

$$
	h_{t}(x,y) = \prob_{x}(W_{t} = y)
$$

for all $x,y \in \Z^{d}$ and note in particular that $\E[\upeta_{t}(x,y)] = h_{t}(x,y)$ by Proposition \ref{duality_lemma}. (Recall that $(\upeta_{t}(x, \cdot))$ denotes an averaging process started from a unit mass on $x$.)

\subsection{The difference walks}

In light of Corollary \ref{two_norm_identity1}, it will be beneficial for us to study the events $W_{t}^{(1)} = W_{t}^{(2)}$ and $\wt{W}_{t}^{(1)}=\wt{W}_{t}^{(2)}$. For $t \geq 0$ we let 

$$
	D_{t} = W_{t}^{(1)} - W_{t}^{(2)}\quad\text{ and }\quad\wt{D}_{t} = \wt{W}_{t}^{(1)} - \wt{W}_{t}^{(2)},
$$

where arithmetic is performed coordinate-wise. The difference processes $(D_{t})$ and $(\wt{D}_{t})$ are both continuous-time random walks on $\Z^{d}$, and note that $D_{t} = \bm{0}$ (resp.\ $\wt{D}_{t} = \bm{0}$) iff $W_{t}^{(1)} = W_{t}^{(2)}$ (resp. $\wt{W}_{t}^{(1)} = \wt{W}_{t}^{(2)}$). It is not hard to see that $(D_{t})$ is in fact a simple random walk with jump rate $1$. $(\wt{D}_{t})$ behaves like $(D_{t})$ outside of $\mathcal{B}_{d}(1)$, but has perturbed dynamics inside of $\mathcal{B}_{d}(1)$ due to the dependency between $\wt{W}_{t}^{(1)}$ and $\wt{W}_{t}^{(2)}$ within distance $1$. We derive the exact dynamics of $(\wt{D}_{t})$ in Section \ref{dynamics}.

The next lemma demonstrates the utility of the difference processes. In particular, it allows us to reduce (i) and (ii) of Theorem \ref{two_norm_theorem} to the problem of estimating the transition probabilities of $(\wt{D}_{t})$ and $(D_{t})$ when each walk is started from the origin.

\begin{lemma}\label{difference_lemma}
	For any probability distribution $\upeta$ on $\Z^{d}$, for all $t \geq 0$
	
	\begin{enumerate}[label = (\roman*)]
	
		\item $\E^{\upeta}\left[ \norm{\upeta_{t}}^{2} \right] \leq \prob_{\bm{0}}(\wt{D}_{t} = \bm{0})$, and 
		
		\item $\E^{\upeta}\left[\norm{\upeta_{t} - \E^{\upeta}[\upeta_{t}]}^{2} \right] \leq \prob_{\bm{0}}(\wt{D}_{t} = \bm{0}) - \prob_{\bm{0}}(D_{t} = \bm{0}).$
		
	\end{enumerate}
	
	with equality in both (i) and (ii) whenever $\upeta$ is a unit mass on a single vertex.
\end{lemma}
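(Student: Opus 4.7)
The plan is to reduce to the unit-mass initial condition via the decomposition (\ref{noisy_heat_kernel}), apply Corollary \ref{two_norm_identity1}, and exploit the translation invariance of the averaging process on $\Z^{d}$. The main obstacle is that the family $\{\upeta_{t}(v, \cdot)\}_{v \in \Z^{d}}$ is highly correlated through the shared edge clocks, so we cannot sum point-mass estimates as if they were independent; instead, convexity-based inequalities (Jensen's for (i), a double Cauchy-Schwarz for (ii)) let us merely average over the starting distribution $\upeta$.

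For (i), by (\ref{noisy_heat_kernel}) we have $\upeta_{t}^{w} = \sum_{v} \upeta^{v}\, \upeta_{t}(v, w)$. Since $\upeta$ is a probability measure on $\Z^{d}$ and $x \mapsto x^{2}$ is convex, Jensen's inequality yields $(\upeta_{t}^{w})^{2} \leq \sum_{v} \upeta^{v}\, \upeta_{t}(v, w)^{2}$ pointwise. Summing over $w$, taking expectation, and using translation invariance together with Corollary \ref{two_norm_identity1}(i) applied to the initial condition $\delta_{v}$, one obtains
$$
\E^{\upeta}\left[\norm{\upeta_{t}}^{2}\right] \,\leq\, \sum_{v} \upeta^{v}\, \E\left[\norm{\upeta_{t}(v, \cdot)}^{2}\right] \,=\, \prob_{\bm{0}}(\wt{D}_{t} = \bm{0}).
$$

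For (ii), set $Y_{t}^{v} := \upeta_{t}(v, \cdot) - h_{t}(v, \cdot)$; by Proposition \ref{duality_lemma}, $\E[\upeta_{t}(v, \cdot)] = h_{t}(v, \cdot)$, so coordinate-wise $\upeta_{t} - \E^{\upeta}[\upeta_{t}] = \sum_{v} \upeta^{v}\, Y_{t}^{v}$. Expanding the squared $\ell^{2}$ norm and interchanging sum and expectation gives
$$
\E^{\upeta}\left[\norm{\upeta_{t} - \E^{\upeta}[\upeta_{t}]}^{2}\right] \,=\, \sum_{v_{1}, v_{2}} \upeta^{v_{1}} \upeta^{v_{2}}\, \E\left[\sum_{w} Y_{t}^{v_{1}, w}\, Y_{t}^{v_{2}, w}\right].
$$
Two applications of Cauchy-Schwarz---first pointwise on the inner sum in $w$, then across the expectation---bound the innermost expectation by $\sqrt{\E[\norm{Y_{t}^{v_{1}}}^{2}]\, \E[\norm{Y_{t}^{v_{2}}}^{2}]}$. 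By translation invariance each factor equals $\E[\norm{Y_{t}^{\bm{0}}}^{2}]$, which in turn equals $\prob_{\bm{0}}(\wt{D}_{t} = \bm{0}) - \prob_{\bm{0}}(D_{t} = \bm{0})$ by Corollary \ref{two_norm_identity1}(iii) applied to $\delta_{\bm{0}}$. Together with $\sum_{v} \upeta^{v} = 1$, this yields (ii).

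The equality statement is immediate: when $\upeta = \delta_{v}$ the Jensen bound in (i) and the double sum in (ii) each collapse to a single term, reproducing the right-hand sides exactly via Corollary \ref{two_norm_identity1}. The one small point requiring verification is the interchange of the expectation with the sum over $w$, which is justified because $\norm{\upeta_{t}(v, \cdot)}^{2}$ and $\norm{Y_{t}^{v}}^{2}$ are bounded a.s.\ by constants thanks to conservation of mass (\ref{conservative}), so dominated convergence applies.
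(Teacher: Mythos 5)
Your argument is correct and reaches the same bounds as the paper, but for part (ii) you take a genuinely different route. The paper handles both parts in one stroke via Jensen's inequality: for each fixed coordinate $y$ it writes $\left(\sum_x \upeta^x(\upeta_t(x,y)-h_t(x,y))\right)^2 \leq \sum_x \upeta^x(\upeta_t(x,y)-h_t(x,y))^2$, sums over $y$, and takes expectations, which immediately gives $\E^{\upeta}[\norm{\upeta_t-\E^{\upeta}[\upeta_t]}^2] \leq \sum_x \upeta^x\, \E[\norm{\upeta_t(x,\cdot)-h_t(x,\cdot)}^2]$ with equality for point masses. You instead expand the $\ell^2$ norm into a double sum over $(v_1, v_2)$ and control each cross term $\E[\langle Y_t^{v_1}, Y_t^{v_2}\rangle]$ by two applications of Cauchy--Schwarz, obtaining $\sqrt{\E[\norm{Y_t^{v_1}}^2]\,\E[\norm{Y_t^{v_2}}^2]}$. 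Under translation invariance both terminate at the same constant, but your bound $\left(\sum_v \upeta^v\sqrt{\E[\norm{Y_t^v}^2]}\right)^2$ is in general sharper than the paper's $\sum_v \upeta^v\,\E[\norm{Y_t^v}^2]$; the paper's Jensen route is shorter and sidesteps the need to justify the rearrangement of the double series over $(v_1, v_2, w)$, which you only partially address (you verify the interchange of expectation with the sum over $w$ but do not explicitly justify the Fubini rearrangement over $(v_1, v_2)$ — easily done via absolute summability using $|Y_t^{v,w}| \leq 2$ and $\sum_v \upeta^v = 1$, but worth stating). Both proofs are valid.
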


\begin{proof}

Using the decomposition (\ref{noisy_heat_kernel}) we may express
	
	\begin{eqnarray}
		\norm{\upeta_{t} - \E^{\upeta}[\upeta_{t}]}^{2} &=& \sum_{y \in \Z^{d}}(\upeta_{t}^{y} - \E^{\upeta}[\upeta_{t}^{y}])^{2}\nonumber\\
		&=&\sum_{y \in \Z^{d}}\left(\sum_{x \in \Z^{d}}\upeta^{x}(\upeta_{t}(x,y) - h_{t}(x,y)) \right)^{2}\nonumber\\
		&\leq&\sum_{y \in \Z^{d}}\sum_{x \in \Z^{d}}\upeta^{x}(\upeta_{t}(x,y) - h_{t}(x,y))^{2}\nonumber\\
		&=&\sum_{x \in \Z^{d}}\upeta^{x}\norm{\upeta_{t}(x,\cdot) - h_{t}(x,\cdot)}^{2}\label{kernel_dev1}
	\end{eqnarray}
	
	where the third line is an application of Jensen's inequality; note that we get an equality in that line whenever $\upeta$ is a unit mass on some $x \in \Z^{d}$. By Corollary \ref{two_norm_identity1} (iii), for any $x$ we have
	
	$$
		\E\left[ \norm{\upeta_{t}(x,\cdot) - h_{t}(x,\cdot)}^{2}\right] = \prob_{x,x}(\wt{W}_{t}^{(1)} = \wt{W}_{t}^{(2)}) - \prob_{x,x}(W_{t}^{(1)}=W_{t}^{(2)})
	$$
	
	where the subscript $x,x$ denotes that both walks in the pair are started from $x$. Observe
	
	$$
		 \prob_{x,x}(\wt{W}_{t}^{(1)} = \wt{W}_{t}^{(2)}) = \prob_{\bm{0}}(\wt{D}_{t} = \bm{0}) \quad\text{ and } \quad \prob_{x,x}(W_{t}^{(1)}, W_{t}^{(2)}) = \prob_{\bm{0}}(D_{t} = \bm{0}).
	$$
	
	Thus, applying expectation to (\ref{kernel_dev1}) and using the dominated convergence theorem to interchange expectation and sum gives
	
	$$
		\E^{\upeta}\left[ \norm{\upeta_{t} - \E^{\upeta}[\upeta_{t}]}^{2} \right] \leq  \prob_{\bm{0}}(\wt{D}_{t} = \bm{0})-\prob_{\bm{0}}(D_{t} = \bm{0})
	$$
	
	again with equality whenever $\upeta$ is concentrated on a single vertex. A similar computation gives 
	
	$$
		\E^{\upeta}\left[ \norm{\upeta_{t}}^{2} \right] \leq \prob_{\bm{0}}(\wt{D}_{t} = \bm{0})
	$$
	
	with the same equality condition as above.
	
\end{proof}

Our goal for the rest of the paper is essentially to estimate $\prob_{\bm{0}}(\wt{D}_{t} = \bm{0})$. A standard local limit theorem gives $\prob_{\bm{0}}(D_{t} = \bm{0}) = (1+o(1))\frac{1}{(2\uppi t)^{d/2}}$. From Lemma \ref{difference_lemma}, we can see that, in order to obtain the $\ell^{2}$ concentration required by Theorem \ref{two_norm_theorem} (i.e., at the scale $o(t^{-d/2})$), we must show that $\prob_{\bm{0}}(\wt{D}_{t} = \bm{0})$ is also asymptotic to $\frac{1}{(2\uppi t)^{d/2}}$. Results with this level of precision generally require analytic techniques to prove, and indeed this is the route we take. In the rest of this section, we translate both walks $(\wt{D}_{t})$ and $(D_{t})$ into discrete time and prove some basic results in this area to prepare for the generating function analysis in Section \ref{gen_func}.

\subsection{Dynamics of the difference walks}\label{dynamics}

We now derive the transition rates for $(\wt{D}_{t})$ and $(D_{t})$, beginning with the former. For distinct $x,y \in \Z^{d}$, define  

\begin{equation}\label{difference_rates}
	\wt{Q}(x,y) = \begin{cases} 
		\frac{1}{2d} \quad&|x-y| = 1 \text{ and not both }x,y \in \mathcal{B}_{d}(1) \\ 
		\frac{1}{4d} \quad& x = \bm{0} \text{ and }y \in \mathcal{S}_{d}(1), \text{ or } x \in \mathcal{S}_{d}(1) \text{ and }y = \bm{0} \\
		\frac{1}{8d}\quad&x \in \mathcal{S}_{d}(1)\text{ and }y = -x \\ 
		0 \quad&\text{otherwise}
	\end{cases}
\end{equation}

One can verify using (\ref{coupled_rates}) that the walk $(\wt{D}_{t})$ has transition rates given by $x \xrightarrow{\wt{Q}(x,y)} y$. We remark that these are the dynamics of a continuous-time simple random walk on $\Z^{d}$ with a perturbation inside the unit ball $\mathcal{B}_{d}(1)$.

If we define the diagonal entries $\wt{Q}(x,x) =-\sum_{y: y \neq x}\wt{Q}(x,y)$, then $\wt{Q}$ is the rate matrix for $(\wt{D}_{t})$. Note that $\max_{x}\wt{Q}(x,x) = 1$, and this maximum is achieved by every $x \not\in \mathcal{B}_{d}(1)$. It is standard (see \cite[Chapter 20]{levin2017markov}) that $\wt{P} = I + \wt{Q}$ is the transition matrix for a discrete-time random walk $(\wt{X}_{n})$ on $\Z^{d}$ with the following property: with $N_{t} \sim \text{Po}(t)$, for any $x,y \in \Z^{d}$ and $t \geq 0$ we have

\begin{equation}\label{poisson_sample}
	\begin{aligned}
		\prob_{x}(\wt{D}_{t} = y) =& \sum_{n=0}^{\infty}\prob(N_{t} = n) \cdot \prob_{x}(\wt{X}_{n} = y)\\
		=&e^{-t}\sum_{n=0}^{\infty}\frac{t^{n}}{n!} \cdot \wt{P}^{n}(x,y).
	\end{aligned}
\end{equation}

Using (\ref{difference_rates}) and the fact that $\wt{P} = I + \wt{Q}$, the transition probabilities for the walk $(\wt{X}_{n})$ are 

\begin{equation}\label{difference_probs}
	\wt{P}(x,y) = \begin{cases} 
		\frac{1}{2d} \quad&|x-y| = 1 \text{ and not both }x,y \in \mathcal{B}_{d}(1) \\ 
		\frac{1}{4d} \quad& x = \bm{0} \text{ and }y \in \mathcal{S}_{d}(1), \text{ or } x \in \mathcal{S}_{d}(1) \text{ and }y = \bm{0} \\
		\frac{1}{8d}\quad&x \in \mathcal{S}_{d}(1)\text{ and }y = -x \\ 
		\frac{1}{8d} \quad& x = y \in \mathcal{S}_{d}(1) \\
		\frac{1}{2} \quad&x = y = \bm{0}\\
		0 \quad& \text{otherwise}
	\end{cases}
\end{equation}

We note in particular that $\wt{P}$ is a symmetric matrix.

Turning to the independent setting, $(D_{t})$ has rate matrix given by 

$$
	Q(x,y) = \begin{cases} 
		\frac{1}{2d}\quad&|x-y| = 1\\
		-1\quad& x=y\\
		0\quad&\text{ otherwise}
	\end{cases}
$$

The corresponding discrete-time walk $(X_{n})$ has transition matrix $P = I + Q$;  $(X_{n})$ is a simple random walk on $\Z^{d}$, and note that the correspondence (\ref{poisson_sample}) also holds between $(D_{t})$ and $(X_{n}).$ 

We will use the following shorthand for the rest of the paper: 

\begin{equation}\label{sequences}
	p_{n} = \prob_{\bm{0}}(X_{n} = \bm{0}) = P^{n}(\bm{0}, \bm{0}) \quad \text{ and } \quad \wt{p}_{n} = \prob_{\bm{0}}(\wt{X}_{n} = \bm{0}) = \wt{P}^{n}(\bm{0}, \bm{0}).
\end{equation}

\subsection{Properties of the discrete-time walks}

We conclude this section by establishing some properties of the walks $(X_{n})$ and $(\wt{X}_{n})$. For background on random walks on infinite graphs, we refer the reader to \cite{woess2000random}, which we use as a source for basic definitions and properties. 

Let $(S_{n})$ be a random walk on a discrete set $\mathcal{X}$. We assume that $(S_{n})$ is \textit{irreducible}, that is, for any pair $x,y \in \mathcal{X}$, $\prob_{x}(S_{n} = y) > 0$ for some $n$. The \textit{period} of $(S_{n})$ is the greatest common divisor of the set $\{\prob_{x}(S_{n} = x)\}$; the \textit{strong period} of $(S_{n})$ is the greatest common divisor of $\{n\,:\,\inf_{x} \{\prob_{x}(S_{n}=x)\}>0\}$, whenever this set is nonempty. It is straightforward to show that these definitions do not depend on $x$ when $(S_{n})$ is irreducible.

The \textit{spectral radius} of a random walk $(S_{n})$ is the quantity

$$
	\limsup_{n \to \infty}\prob_{x}(S_{n} = y)^{1/n} \in (0,1].
$$

This definition is independent of the choice of $x$ and $y$, owing again to irreducibility. Equivalently, it is the spectral radius (in the operator theoretic sense) of the transition matrix of $(S_{n})$ considered as a linear operator on the space $\ell^{2}(X)$.

The simple random walk $(X_{n})$ on $\Z^{d}$ has period and strong period $2$. The modified walk $(\wt{X}_{n})$ has period $1$ due to the lazy steps at points in $\mathcal{B}_{d}(1)$, but strong period $2$ since $\inf_{x}\{\prob_{x}(\wt{X}_{n}=x)\}$ is positive precisely when $n$ is even. We compute the spectral radii in the next proposition. 

\begin{proposition}\label{spectral_radius}
	Let $\uprho$ and $\wt{\uprho}$ be the spectral radii of $(X_{n})$ and $(\wt{X}_{n})$, respectively. Then $\uprho = \wt{\uprho} = 1$.
\end{proposition}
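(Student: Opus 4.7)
The plan is to prove the two inequalities $\uprho, \wt{\uprho} \leq 1$ and $\uprho, \wt{\uprho} \geq 1$ separately. The upper bounds are trivial: since $P$ and $\wt{P}$ are stochastic, every entry of $P^{n}$ and $\wt{P}^{n}$ lies in $[0,1]$, so directly from the definition of spectral radius, $\limsup_{n \to \infty} M^{n}(\bm{0}, \bm{0})^{1/n} \leq 1$ for $M \in \{P, \wt{P}\}$.

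For the lower bounds, the strategy is to combine the symmetry of both transition matrices with the polynomial volume growth of $\Z^{d}$. Symmetry of $P$ is immediate, and symmetry of $\wt{P}$ was noted just after (\ref{difference_probs}). Applied to $M^{2n}(\bm{0}, \bm{0}) = \sum_{x} M^{n}(\bm{0}, x)\,M^{n}(x, \bm{0})$, this gives
\[
p_{2n} = \sum_{x \in \Z^{d}} P^{n}(\bm{0}, x)^{2}, \qquad \wt{p}_{2n} = \sum_{x \in \Z^{d}} \wt{P}^{n}(\bm{0}, x)^{2}.
\]
Both walks have bounded step size: a single step of $X_{n}$ or $\wt{X}_{n}$ moves the walker by $\ell^{1}$-distance at most $2$, where the extremal value is attained by the swap transition $x \mapsto -x$ for $x \in \mathcal{S}_{d}(1)$ in $\wt{P}$. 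Therefore $P^{n}(\bm{0}, \cdot)$ and $\wt{P}^{n}(\bm{0}, \cdot)$ are both supported in $\mathcal{B}_{d}(2n)$, which has cardinality $O(n^{d})$. By Cauchy--Schwarz applied over this finite support,
\[
1 = \sum_{x \in \mathcal{B}_{d}(2n)} \wt{P}^{n}(\bm{0}, x) \;\leq\; |\mathcal{B}_{d}(2n)|^{1/2}\,\Bigl(\sum_{x} \wt{P}^{n}(\bm{0}, x)^{2}\Bigr)^{1/2} \;=\; |\mathcal{B}_{d}(2n)|^{1/2}\,\wt{p}_{2n}^{1/2},
\]
so $\wt{p}_{2n} = \Omega(n^{-d})$ and hence $\wt{p}_{2n}^{1/(2n)} \to 1$. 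The identical computation applied to $p_{2n}$ yields $\uprho \geq 1$, completing the proof.

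There is no real obstacle here; the argument is the standard reason a symmetric bounded-range walk on an amenable group has spectral radius $1$. The only minor point requiring any care is confirming that the anomalous swap transition in $\wt{P}$ does not enlarge the support of $\wt{P}^{n}(\bm{0}, \cdot)$ beyond polynomial growth, which is immediate from its bounded step size.
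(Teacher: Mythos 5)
Your proof is correct, but it takes a genuinely different (and more self-contained) route than the paper. The paper first establishes a sub-linear rate of escape for $(\wt{X}_{n})$ via a second-moment drift computation ($\E[\|\wt{X}_{n}\|^{2}] \leq 4n$, hence $|\wt{X}_{n}|/n \xrightarrow{\prob} 0$), then invokes the external fact $\wt{P}^{n}(x,x) \leq \wt{\uprho}^{n}$ from \cite[Lemma 1.9]{woess2000random}, and derives a contradiction: if $\wt{\uprho} < 1$, then symmetry gives $\prob(|\wt{X}_{n}| \leq n) = O(n^{d}\wt{\uprho}^{n}) \to 0$, which is incompatible with sub-linear escape. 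You instead bound the return probability from below directly: the same symmetry identity $\wt{p}_{2n} = \sum_{x}\wt{P}^{n}(\bm{0},x)^{2}$, combined with the bounded step size (which confines $\wt{P}^{n}(\bm{0},\cdot)$ to a ball of $O(n^{d})$ sites) and Cauchy--Schwarz, yields $\wt{p}_{2n} = \Omega(n^{-d})$ outright, which forces $\wt{\uprho} \geq 1$; the reverse inequality is trivial. Both arguments ultimately hinge on symmetry and polynomial volume growth, but yours avoids the sub-linear-escape intermediate step and the citation to Woess entirely, at the cost of leaning on the bounded-range property (which the paper's approach does not use, and which would fail for long-range perturbations). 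Your argument also quantitatively gives $\wt{p}_{2n} = \Omega(n^{-d})$, whereas the paper's contradiction argument gives only the qualitative conclusion $\wt{\uprho} = 1$; the paper recovers the sharp polynomial rate later via singularity analysis. Your observation that the swap step $x \mapsto -x$ on $\mathcal{S}_{d}(1)$ has $\ell^{1}$-length $2$ but this does not damage the polynomial support bound is exactly the right point to flag.
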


\begin{proof}
That $\uprho = 1$ is a standard result, and follows from the definition of spectral radius and the fact that $P^{n}(\bm{0}, \bm{0}) = \Uptheta(n^{-d/2})$. To show $\wt{\uprho} = 1$, we first show that $(\wt{X}_{n})$ has a \textit{sub-linear rate of escape}, i.e., $\frac{|\wt{X}_{n}|}{n} \xrightarrow{\prob} 0$ when $(\wt{X}_{n})$ is started from the origin.

Let $\wt{X}_{0} = \bm{0}$, and set $\wt{R}_{n} = \norm{\wt{X}_{n} }^{2}$ for $n \geq 0$ and $\wt{\uptheta}_{n} = \wt{X}_{n} - \wt{X}_{n-1}$ for $n \geq 1$. Then for any $x \in \Z^{d}$, $n \geq 0$,

\begin{align*}
	\E\left[\wt{R}_{n+1} - \wt{R}_{n}\,|\, \wt{X}_{n} = x\right] =& \E\left[\langle\wt{X}_{n} + \wt{\uptheta}_{n+1}, \wt{X}_{n} + \wt{\uptheta}_{n+1} \rangle - \langle \wt{X}_{n}, \wt{X}_{n} \rangle \,|\, \wt{X}_{n} = x \right] \\
	=&\E\left[2\langle \wt{X}_{n}, \wt{\uptheta}_{n+1} \rangle + \langle \wt{\uptheta}_{n+1}, \wt{\uptheta}_{n+1} \rangle \,|\,\wt{X}_{n} = x \right]
\end{align*}

The term $\E[2\langle \wt{X}_{n},\wt{\uptheta}_{n+1}\rangle\,|\,\wt{X}_{n} = x]$ is $0$ for any $x$ because the increments of the walk are symmetric. (This is obvious from the transition probabilities (\ref{difference_probs}) everywhere except perhaps for $x \in \mathcal{S}_{d}(1)$, which can be easily checked.) The term $\E[\langle \wt{\uptheta}_{n+1}, \wt{\uptheta}_{n+1} \rangle \,|\,\wt{X}_{n} = x]$ is the squared length of the increment vector, which is, crudely, at most $4$ for any $x$ (this simply accounts for the ``reflection" steps of length $2$ in $\mathcal{S}_{d}(1)).$ Hence $\E[\wt{R}_{n+1} - \wt{R}_{n}] \leq 4$ and, consequently, $\E[\wt{R}_{n}] \leq 4n.$ Then
$$
	\E[|\wt{X}_{n}|] \leq \E\left[ \sqrt{d\wt{R}_{n}} \right] \leq  \sqrt{4d n}
$$ 

where the first inequality follows from an application of Cauchy-Schwarz, and the second from Jensen's inequality. This is enough to give $\frac{|\wt{X}_{n}|}{n} \xrightarrow{\prob} 0$ as desired.

To finish, by \cite[Lemma 1.9]{woess2000random} we have that $\wt{P}^{n}(x, x) \leq \wt{\uprho}^{n}$ for all $x$, and thus by the symmetry of $\wt{P}$

$$
	(\wt{P}^{n}(x,y))^{2} = \wt{P}^{n}(x,y)\wt{P}^{n}(y,x) \leq \wt{P}^{2n}(x,x) \leq \wt{\uprho}^{2n}.
$$

So 

\begin{equation}\label{spectral_bound}
	\prob(|\wt{X}_{n} | \leq n) \leq \text{Vol}(\mathcal{B}_{d}(n))\wt{\uprho}^{n} = O(n^{d}\wt{\uprho}^{n}).
\end{equation}

On the other hand, $\frac{|\wt{X}_{n} |}{n} \xrightarrow{\prob} 0$ implies that $\prob(|\wt{X}_{n}| >  n) \to 0$ as $n \to \infty$. But this contradicts (\ref{spectral_bound}) unless $\wt{\uprho} = 1$. 

\end{proof}

\section{Generating functions and singularity analysis}\label{gen_func}

\subsection{Green's functions}

Recall from Section \ref{discrete_rw} the definitions of the walks $(X_{n})$ and $(\wt{X}_{n})$ on $\Z^{d}$.  Let $p_{n}$ and $\wt{p}_{n}$ be as in (\ref{sequences}). Formally define the \textit{Green's functions} 

$$
	G(z) = \sum_{n=0}^{\infty}p_{n}z^{n} \quad \text{ and } \quad \wt{G}(z) = \sum_{n=0}^{\infty}\wt{p}_{n}z^{n}.
$$ 

Observe that $G(z)$ is an even function, since all of its odd-power coefficients are $0$.

Our goal for this section is to derive an asymptotic estimate for the sequence $(\wt{p}_{n})$ by performing singularity analysis on $\wt{G}(z)$. We achieve this by establishing a relationship between $\wt{G}(z)$ and $G(z)$ in Proposition \ref{gf_prop}. The proof requires some auxiliary generating functions, which we define now.

For a subset $S \subseteq \Z^{d}$, let $\uptau_{S} =\inf\{n \geq 1: X_{n} \in S\}$ and $\wt{\uptau}_{S} = \inf\{n \geq 1: \wt{X}_{n} \in S\}$. When $S$ is a singleton $\{x\}$, we simply write $\uptau_{x}$ and $\wt{\uptau}_{x}$. For $n \geq 1$, let 

$$
q_{n}=\prob_{\bm{0}}(\uptau_{\bm{0}} = n) \quad \text{ and } \quad \wt{q}_{n}=\prob_{\bm{0}}(\wt{\uptau}_{\bm{0}} = n)
$$

and formally define $Q(z) = \sum_{n=1}^{\infty}q_{n}z^{n}$ and $\wt{Q}(z) = \sum_{n=1}^{\infty}\wt{q}_{n}z^{n}.$ For $n \geq 0$ define

$$
r_{n} = \prob_{\mathcal{S}_{d}(1)}(X_{n} \in \mathcal{S}_{d}(1),\,\uptau_{\bm{0}} > n) \quad \text{ and } \quad \wt{r}_{n} = \prob_{\mathcal{S}_{d}(1)}(\wt{X}_{n} \in \mathcal{S}_{d}(1),\,\wt{\uptau}_{\bm{0}} > n)
$$

and let $R(z), \wt{R}(z)$ be the corresponding generating functions. By conditioning on the events $\{X_{0} \in \mathcal{S}_{d}(1)\}$ and $\{\wt{X}_{0} \in \mathcal{S}_{d}(1)\}$, we mean conditioning on $X_{0}$, $\wt{X}_{0}$ being at some given $x \in \mathcal{S}_{d}(1)$; by the symmetry of the walks, the definitions of $r_{n}$ and $\wt{r}_{n}$ do not depend on the choice of $x$. We keep this abuse of notation for convenience.

Finally, for $n \geq 1$, let

$$
s_{n}=\prob_{\mathcal{S}_{d}(1)}(\uptau_{\mathcal{S}_{d}(1)} = n,\,\uptau_{\bm{0}} > n) \quad \text{ and } \quad \wt{s}_{n}:=\prob_{\mathcal{S}_{d}(1)}(\wt{\uptau}_{\mathcal{S}_{d}(1)} = n,\,\wt{\uptau}_{\bm{0}} > n)
$$

with corresponding generating functions $S(z)$ and $\wt{S}(z)$.

\begin{proposition}\label{gf_prop}
	The Green's functions $G(z)$ and $\wt{G}(z)$ satisfy the formal relation
		\begin{equation}\label{gf0}
			\wt{G}(z) = \frac{1 - (1-2z)G(z)}{1 - (1-z)^{2}G(z)}
		\end{equation}
\end{proposition}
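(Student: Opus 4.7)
The plan is to express both $G$ and $\wt{G}$ in terms of a common auxiliary family of generating functions and then eliminate the auxiliaries algebraically. The crucial observation driving the whole argument is that $\wt{P}$ and $P$ agree on every transition originating outside $\mathcal{B}_d(1)$; the two walks differ only in their behavior on the finite set $\mathcal{B}_d(1) = \{\bm{0}\} \cup \mathcal{S}_d(1)$, and this localization is what allows $\wt{G}$ to be recovered from $G$ by a simple rational transformation.

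First I would invoke the standard first-return identities $G = 1/(1-Q)$ and $\wt{G} = 1/(1-\wt{Q})$ (each an instance of the convolution formula for excursions from a distinguished vertex) and express $Q$, $\wt{Q}$ in terms of $R$, $\wt{R}$. For the standard walk, a first return of length $n \geq 2$ decomposes as $\bm{0} \to x \in \mathcal{S}_d(1) \to \cdots \to y \in \mathcal{S}_d(1) \to \bm{0}$, and the strong Markov property combined with the symmetry of $\Z^d$ about $\bm{0}$ collapses the sum over $x,y$ into a single factor $r_{n-2}$, giving $q_n = r_{n-2}/(2d)$ and hence $Q(z) = z^2 R(z)/(2d)$. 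The modified walk satisfies the same decomposition for $n \geq 2$, but with $\wt{P}(\bm{0},x) = \wt{P}(y,\bm{0}) = 1/(4d)$ in place of $1/(2d)$, yielding $\wt{q}_n = \wt{r}_{n-2}/(8d)$; additionally it picks up a new $n=1$ term $\wt{q}_1 = \wt{P}(\bm{0},\bm{0}) = 1/2$ from the lazy step. Thus $\wt{Q}(z) = z/2 + z^2 \wt{R}(z)/(8d)$.

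Next I would decompose $R, \wt{R}$ into excursions from $\mathcal{S}_d(1)$ that avoid $\bm{0}$, giving $R = 1/(1 - S)$ and $\wt{R} = 1/(1 - \wt{S})$. The key identity to establish here is $\wt{S}(z) = S(z) + z/(4d)$. For $n \geq 2$, any trajectory contributing to either $s_n$ or $\wt{s}_n$ must take its first step from $\mathcal{S}_d(1)$ into $\Z^d \setminus \mathcal{B}_d(1)$ (the only target set that neither returns to $\mathcal{S}_d(1)$ immediately nor lands in the forbidden vertex $\bm{0}$) and then evolve entirely within $\Z^d \setminus \mathcal{B}_d(1)$ until first re-entering $\mathcal{S}_d(1)$; on that region $\wt{P} \equiv P$, so $\wt{s}_n = s_n$. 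Only $n=1$ contributes a genuine difference: $s_1 = 0$ because the standard walk cannot return to $\mathcal{S}_d(1)$ in a single step, whereas $\wt{s}_1 = \wt{P}(x,x) + \wt{P}(x,-x) = 1/(4d)$ from the lazy and reflection moves available to the modified walk.

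The remaining step is routine algebra. Combining $G = 1/(1-Q)$, $Q = z^2 R/(2d)$, and $R = 1/(1 - S)$ lets me solve for $1 - S(z) = z^2 G(z) / (2d(G(z) - 1))$. Plugging $1 - \wt{S} = (1 - S) - z/(4d)$ into $\wt{R} = 1/(1 - \wt{S})$ and then into $\wt{Q} = z/2 + z^2 \wt{R}/(8d)$, the linear-in-$z$ terms combine after placing over a common denominator so that $\wt{Q}(z) = z^2 G(z) / ((2z - 1)G(z) + 1)$. Using the factorization $(2z - 1) - z^2 = -(1-z)^2$ one then obtains $1 - \wt{Q}(z) = (1 - (1-z)^2 G(z)) / ((2z - 1)G(z) + 1)$, and inverting yields the claimed identity. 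The main obstacle I anticipate is not the algebra but the careful bookkeeping in the decompositions — keeping the lazy step at $\bm{0}$ and the lazy/reflection steps on $\mathcal{S}_d(1)$ separate, and using symmetry to absorb the sums over $x, y \in \mathcal{S}_d(1)$ into clean factors of $r_{n-2}$ and $\wt{r}_{n-2}$; once those are in place everything simplifies mechanically.
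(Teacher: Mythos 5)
Your argument is correct and follows essentially the same route as the paper's proof: both decompose via the hierarchy $\wt{G} \to \wt{Q} \to \wt{R} \to \wt{S}$ (and its untilded analogue), both identify $\wt{Q}(z) = z/2 + z^2\wt{R}(z)/(8d)$ versus $Q(z) = z^2 R(z)/(2d)$, and both hinge on the observation that $\wt{s}_n = s_n$ for $n \geq 2$ (because the walks agree outside $\mathcal{B}_d(1)$) while $\wt{s}_1 = 1/(4d)$ and $s_1 = 0$, yielding $\wt{S}(z) = S(z) + z/(4d)$. The only difference is cosmetic bookkeeping in the final algebraic elimination; your intermediate formulas ($1-S = z^2 G/(2d(G-1))$, $\wt{Q} = z^2 G/((2z-1)G+1)$, and the use of $(2z-1)-z^2 = -(1-z)^2$) all check out.
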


\begin{proof}

We rely throughout on the transition probabilities for the walk $(\wt{X}_{n})$ given in (\ref{difference_probs}). For any $n \geq 1$ we have

\begin{align*}
\wt{p}_{n} =&\sum_{j=1}^{n}\prob_{\bm{0}}(\wt{X}_{n} = \bm{0}\,|\,\wt{\uptau}_{\bm{0}} = j)\prob_{\bm{0}}(\wt{\uptau}_{\bm{0}}=j)\\
=&\sum_{j=1}^{n}\prob_{\bm{0}}(\wt{X}_{n-j} = \bm{0})\prob_{\bm{0}}(\wt{\uptau}_{\bm{0}}=j)\\
=&\sum_{j=1}^{n}\wt{p}_{n-j}\wt{q}_{j}
\end{align*}

and $\wt{p}_{0} = 1$. Thus we have the functional identity 

\begin{equation}\label{gf1}
	\wt{G}(z) = 1 + \wt{G}(z)\wt{Q}(z)
\end{equation}

Observe that

$$\wt{q}_{1} = \prob_{\bm{0}}(\wt{\uptau}_{\bm{0}} = 1) = \prob_{\bm{0}}(\wt{X}_{1} = \bm{0}) = \frac{1}{2}.$$

For $n \geq 2$, if $\wt{X}_{0} = \bm{0}$ and $\wt{\uptau}_{\bm{0}} = n$, the trajectory $(\wt{X}_{0}, \wt{X}_{1}, \dots, \wt{X}_{n})$ consists precisely of an initial step from $\bm{0}$ to $\mathcal{S}_{d}(1)$, a walk of length $n-2$ which begins and ends in $\mathcal{S}_{d}(1)$ and avoids $\bm{0}$, and a final return step from $\mathcal{S}_{d}(1)$ to $\bm{0}.$ Thus

$$ \wt{q}_{n} = \prob_{\bm{0}}(\wt{X}_{1} \in \mathcal{S}_{d}(1)) \cdot \hspace{9cm}$$
$$\prob_{\mathcal{S}_{d}(1)}(\wt{X}_{n-2} \in \mathcal{S}_{d}(1),\,\wt{\uptau}_{\bm{0}} > n-2) \cdot$$
$$\hspace{8cm}\prob_{\mathcal{S}_{d}(1)}(\wt{X}_{1} = \bm{0})$$

which is equivalent to $\wt{q}_{n} = \frac{1}{8d}\wt{r}_{n-2}.$ Thus

\begin{equation}\label{gf2}
	\wt{Q}(z) = \frac{z}{2} + \sum_{n=2}^{\infty}\wt{q}_{n}z^{n} = \frac{z}{2} + \frac{z^{2}\wt{R}(z)}{8d}.
\end{equation}

By conditioning on possible values of $\wt{\uptau}_{\mathcal{S}_{d}(1)}$ as in the derivation of (\ref{gf1}), one can derive

\begin{equation}\label{gf3}
\wt{R}(z) = 1 + \wt{R}(z)\wt{S}(z).
\end{equation}

Combining (\ref{gf1}), (\ref{gf2}), and (\ref{gf3}) gives

\begin{equation}\label{gf4}
\wt{G}(z) = \frac{1 - \wt{S}(z)}{(1-z/2)(1-\wt{S}(z)) - z^{2}/8d}
\end{equation}

Now, we focus on $\wt{S}(z)$. When $\wt{X}_{0} \in \mathcal{S}_{d}(1)$, the only ways for $\wt{\uptau}_{\mathcal{S}_{d}(1)}$ to equal $1$ are for the first step in the walk to be a lazy step ($\wt{X}_{1} = \wt{X}_{0}$) or a reflection ($\wt{X}_{1} = -\wt{X}_{0}$). One of these occurs with probability $\frac{1}{8d} + \frac{1}{8d} = \frac{1}{4d}$, and hence $\wt{s}_{1} = \frac{1}{4d}$. 

We claim that for $n \geq 2$, $\wt{s}_{n} = s_{n}$. To see this, note that the hitting times $\uptau_{\mathcal{S}_{d}(1)}$ and $\wt{\uptau}_{\mathcal{S}_{d}(1)}$ have the same distribution whenever $X_{0} = \wt{X}_{0} \not\in \mathcal{B}_{d}(1)$, due to the fact that the walks $(X_{n})$ and $(\wt{X}_{n})$ have identical transition probabilities outside of $\mathcal{B}_{d}(1)$. Moreover, for any $x \in \mathcal{S}_{d}(1)$ and $y \in \mathcal{S}_{d}(2)$, we have $\prob_{x}(\wt{X}_{1} = y) = \prob_{x}(X_{1} = y)$. Thus, for any $n \geq 2$ and $x \in \mathcal{S}_{d}(1)$,

\begin{align*}
\prob_{x}(\wt{\uptau}_{\mathcal{S}_{d}(1)} = n,\,\wt{\uptau}_{\bm{0}} > n) =& \sum_{y \in \mathcal{S}_{d}(2)}\prob_{y}(\wt{\uptau}_{\mathcal{S}_{d}(1)} = n-1)\prob_{x}(\wt{X}_{1} = y)\\
 =& \sum_{y \in \mathcal{S}_{d}(2)}\prob_{y}(\uptau_{\mathcal{S}_{d}(1)} = n-1)\prob_{x}(X_{1} = y)\\
 =&\prob_{x}(\uptau_{\mathcal{S}_{d}(1)} = n,\,\uptau_{\bm{0}} > n)
\end{align*}

Since $s_{1} = 0$, we have 

\begin{equation}\label{gf5}
\wt{S}(z) = \frac{z}{4d} + S(z).
\end{equation}

In the same manner in which we derived (\ref{gf1}), (\ref{gf2}), and (\ref{gf3}) for the walk $(\wt{X}_{n})$, one can derive for $(X_{n})$ that

\begin{align*}
G(z) =& 1 + G(z)Q(z)\\
Q(z) =& \frac{z^{2}R(z)}{2d}\\
R(z) =& 1 + R(z)S(z)
\end{align*}

from which we can solve for $S(z)$ to reach

\begin{equation}\label{gf6}
S(z) = 1 - \frac{z^{2}}{2d}\cdot\frac{G(z)}{G(z) -1}
\end{equation}

From (\ref{gf5}) and (\ref{gf6}), we get $\wt{S}(z) = 1 + \frac{z}{4d} - \frac{z^{2}}{2d}\cdot\frac{G(z)}{G(z)-1}$; substituting this into (\ref{gf4}) and performing some simplifications yields (\ref{gf0}).

\end{proof}

\subsection{Background on singularity analysis}

Our framework for the remainder of the section is singularity analysis in the manner of Flajolet \& Sedgwick \cite{flajolet2009analytic}, from which we now recall some notation and conventions. For $R > 1$ and $0 < \upphi < \frac{\uppi}{2}$, let

$$
	\Updelta(\upphi, R) := \{z \in \C\,:\,|z| < R,\,|\arg(z-1)| > \upphi\}.
$$

Any set of this form is called $\Updelta$-\textit{domain}. For $a,b \in \C$, define 

\begin{equation}\label{standard_scale}
	\bm{h}_{a,b}(z):=(1-z)^{-a}\log^{b}\left(\frac{1}{1-z}\right).
\end{equation}

We call the set $\{\bm{h}_{a,b}(z)\,:\,a,b \in \C\}$ the \textit{standard scale} of functions singular at $z = 1$. For any $a,b \in \C$, the function $\bm{h}_{a,b}(z)$ is analytic on some $\Updelta$-domain. We recall three results from \cite[Section VI.2]{flajolet2009analytic} which will be of use to us. They are asymptotic expressions for the coefficient $[z^{n}]\bm{h}_{a,b}(z)$ of the power series for $\bm{h}_{a,b}(z)$ centered at $z = 0$ for certain values of $a$ and $b$. First, when $a \not\in \Z_{\leq 0}$, we have

\begin{equation}\label{standard_scale_asymp1}
	[z^{n}]\bm{h}_{a,0}(z) = \frac{n^{a-1}}{\Upgamma(a)}\left(1 + O\left( \frac{1}{n} \right) \right).
\end{equation}

Second, when $a \in \Z_{\leq 0}$,

\begin{equation}\label{standard_scale_asymp2}
	[z^{n}]\bm{h}_{a,1}(z) = (-1)^{a}\Upgamma(-a)n^{a-1}\left(1 + O\left(\frac{1}{n} \right) \right).
\end{equation}

Finally,

\begin{equation}\label{standard_scale_asymp3}
	[z^{n}]\bm{h}_{0,-1}(z) = O\left(\frac{1}{n\log^{2}n }\right)
\end{equation}

(For (\ref{standard_scale_asymp2}) and (\ref{standard_scale_asymp3}) in particular, see \cite[p.\ 387]{flajolet2009analytic}.) To perform singularity analysis on $\wt{G}(z)$, we follow a series of steps. First, we compute the radius of convergence of $\wt{G}(z)$ and locate all singularities on the circle of convergence (the \textit{dominant} singularities). Next, at each dominant singularity $\upzeta$ we obtain a \textit{singular expansion} for $\wt{G}(z)$ of the form 

$$
	\wt{G}(z) = \wt{f}(z/\upzeta) + \bm{\upsigma}(z/\upzeta) + O(\bm{\uptau}(z/\upzeta)) \quad \text{ as } z \to \upzeta
$$ 

where $\bm{\upsigma}(z)$ and $\bm{\uptau}(z)$ are linear combinations of functions from the standard scale (\ref{standard_scale}), and $\wt{f}(z)$ is some function analytic at $z = 1$. Singular expansions in hand, we finally apply the general result \cite[Theorem VI.5]{flajolet2009analytic} to obtain asymptotic expressions for the coefficients $\wt{p}_{n} = [z^{n}]\wt{G}(z)$. For a more detailed treatment of this process, we invite the reader to consult \cite[Chapter VI]{flajolet2009analytic}.

All of the steps outlined above are made possible by the functional relationship (\ref{gf0}) between $\wt{G}(z)$ and $G(z)$, and the fact that the simple random walk on $\Z^{d}$ is very well understood. In particular, we obtain singular expansions of $\wt{G}(z)$ in straightforward manner from singular expansions of $G(z)$. 

It is worth remarking that understanding the precise behavior of $G(z)$ near its singularities is a fairly delicate procedure (except when $d=1$, in which case we have the explicit formula $G(z) = \frac{1}{\sqrt{1-z^{2}}}$). While it is well-known that $p_{n} = [z^{n}]G(z) = (1+o(1))\frac{2}{(2\uppi n)^{d/2}}$ for even $n$, one cannot deduce singular expansions for $G(z)$ from this information alone by, say, matching the asymptotics of $p_{n}$ to the asymptotics of $[z^{n}]\bm{h}_{a,b}(z)$ in (\ref{standard_scale_asymp1})-(\ref{standard_scale_asymp3})---to do this, one must know \textit{a priori} that $G(z)$ admits singular expansions of this form at its dominant singularities in the first place. Most of the heavy lifting is done for us in a result from \cite{woess2000random}, which we are able to cite with only small modifications.

\subsection{Singularity analysis of the Green's functions}

We proceed with singularity analysis, beginning with the location of the dominant singularities.

\begin{proposition}\label{dominant_singularities}
	The Green's functions $G(z)$ and $\wt{G}(z)$ have radii of convergence $1$ and dominant singularities at $z = \pm 1$
\end{proposition}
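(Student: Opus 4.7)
The plan is to combine Proposition \ref{spectral_radius} with Pringsheim's theorem and the functional relation from Proposition \ref{gf_prop} to locate the dominant singularities. Since $p_n, \wt{p}_n \geq 0$, the Cauchy--Hadamard formula combined with Proposition \ref{spectral_radius} (giving $\uprho = \wt{\uprho} = 1$) immediately yields that both $G$ and $\wt{G}$ have radius of convergence $1$. Pringsheim's theorem then forces $z = 1$ to be a singularity of each, since both series have non-negative coefficients.

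For the singularity at $z = -1$, the case of $G$ is handled via evenness: $(X_n)$ has period $2$, so $p_n = 0$ for odd $n$ and hence $G(-z) = G(z)$; the singularity at $z = 1$ thus reflects to $z = -1$. For $\wt{G}$ this argument fails because $(\wt{X}_n)$ is aperiodic (due to lazy steps in $\mathcal{B}_d(1)$), so I would instead argue through the functional equation (\ref{gf0}). Rewriting it as
\[
	\wt{G}(z) = \frac{1-2z}{(1-z)^2} - \frac{z^2}{(1-z)^2\bigl[(1-z)^2 G(z) - 1\bigr]},
\]
the singularity of $G$ at $z = -1$ will propagate to $\wt{G}$ as long as the bracketed quantity does not vanish at $z = -1$. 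But its limit there is $4G(1) - 1$, which is strictly positive for $d \geq 3$ (using $G(1) \geq p_0 = 1$) and $+\infty$ for $d \leq 2$ (where the simple random walk is recurrent), so no cancellation occurs.

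To exclude other dominant singularities, I would invoke the Fourier representation
\[
	G(z) = \frac{1}{(2\pi)^d}\int_{[0,2\pi]^d} \frac{d\theta}{1 - z\phi(\theta)},\qquad \phi(\theta) = \frac{1}{d}\sum_{j=1}^d \cos\theta_j.
\]
The condition $|\phi(\theta)| = 1$ forces $\theta \in \{\mathbf{0}, \pi\mathbf{1}\}$ (where $\phi = \pm 1$ respectively), so for any $z = e^{i\alpha}$ on the unit circle with $\alpha \notin \{0, \pi\}$, the integrand $(1 - z\phi(\theta))^{-1}$ is bounded and $G$ extends analytically through $z$. Analyticity of $\wt{G}$ at such $z$ then follows from (\ref{gf0}), using continuity and boundedness of $G$ on compact subsets of the unit circle away from $\pm 1$ to verify the denominator in (\ref{gf0}) is nonzero.

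The main obstacle I anticipate is the propagation argument for $\wt{G}$ at $z = -1$: although the algebraic rewriting above isolates the singularity cleanly, one must confirm that the structure of the formula (\ref{gf0}) does not cause the singular parts in numerator and denominator to cancel. Here the case split between transient and recurrent dimensions is unavoidable, but the crude bound $G(1) \geq 1$ makes each case straightforward.
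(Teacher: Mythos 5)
Your overall strategy differs genuinely from the paper's: for the containment of dominant singularities the paper cites \cite[Theorem~9.4]{woess2000random} in terms of the period and strong period, whereas you use the Fourier representation for $G$ and then the functional equation~(\ref{gf0}) for $\wt{G}$; and for the singularity of $\wt{G}$ at $z=-1$ the paper argues by contradiction through~(\ref{gf_reversed}), while you isolate the singular part of~(\ref{gf0}) directly. The first three steps (radius of convergence via spectral radius, Pringsheim at $z=1$, evenness of $G$ at $z=-1$) are correct and match the paper. However, there are two places where your argument is incomplete.

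First, the claim that ``no cancellation occurs'' at $z=-1$ for $\wt{G}$ when $d\le 2$ is not an argument. When the bracketed quantity $(1-z)^2G(z)-1$ tends to $+\infty$, the term $\frac{z^2}{(1-z)^2[(1-z)^2G(z)-1]}$ tends to $0$, and you must rule out that its reciprocal structure renders it analytic at $z=-1$. (For example, if $G$ had a simple pole at $z=-1$, the reciprocal would in fact be analytic there, killing the propagation.) To close this you need to observe that $G$ cannot have a pole anywhere on $|z|=1$, because $p_n\to 0$; this is essentially the contradiction the paper carries out explicitly. Your $d\ge 3$ case, where the limit is the finite nonzero value $4\upalpha_d-1$, is fine.

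Second, and more seriously, your exclusion of other dominant singularities for $\wt{G}$ is unsupported. You write that ``continuity and boundedness of $G$ on compact subsets of the unit circle away from $\pm 1$'' verifies that the denominator $1-(1-z)^2G(z)$ of~(\ref{gf0}) is nonzero there, but boundedness of $G$ does not preclude $(1-z)^2G(z)=1$ at some $z_0=e^{i\alpha}$, $\alpha\notin\{0,\pi\}$. At such a $z_0$ the numerator equals $z_0^2/(1-z_0)^2\ne 0$, so $\wt{G}$ would have a pole there, and ruling this out requires more than continuity: one must compare the growth of $|\wt{G}(rz_0)|$ as $r\uparrow 1$ against $\wt{G}(r)$, which for $d\le 2$ needs a quantitative bound on $\wt{p}_n$ that is not yet available at this point in the development. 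This is exactly the work that \cite[Theorem~9.4]{woess2000random} packages for you; without it, or a substitute argument, this step is a genuine gap.
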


\begin{proof}

 \cite[Theorem 9.4]{woess2000random} gives that the Green's function of any irreducible random walk with spectral radius $1$ has dominant singularities contained in $\{e^{2\uppi i k / d_{s}}\}_{k=0}^{d_{s}}$, where $d_{s}$ is the strong period, and further that these points comprise all dominant singularities when the period and strong period coincide. Moreover, since the coefficients of a Green's function are nonnegative and real, $z = 1$ must be a singularity by Pringsheim's theorem.
 
By Proposition \ref{spectral_radius}, both $(X_{n})$ and $(\wt{X}_{n})$ have spectral radius $1$. Since $(X_{n})$ has period and strong period $2$, $G(z)$ has dominant singularities at precisely $z = \pm 1$. The dominant singularities of $\wt{G}(z)$ are contained in $\{-1,1\}$, since $(\wt{X}_{n})$ has period $1$ but strong period $2$. To finish, we show that $z = -1$ must be a singularity for $\wt{G}(z)$.

Suppose for the sake of contradiction that $\wt{G}(z)$ has an analytic continuation to some neighborhood of $z = -1$. By solving the relation (\ref{gf0}) for $G(z)$, we find 

\begin{equation}\label{gf_reversed}
	G(z) = \frac{1 - \wt{G}(z)}{(1-2z) - (1-z)^{2}\wt{G}(z)}
\end{equation}

Since $G(z)$ has a singularity at $z = -1$ and $\wt{G}(z)$ is analytic in a neighborhood of $z = -1$, it must be the case that the denominator of (\ref{gf_reversed}) has a zero at $z = -1$---else $G(z)$ is analytic there as well. Note that if the denominator of (\ref{gf_reversed}) is $0$ at $z = -1$, the numerator cannot also be $0$ there. Thus $G(z)$ has a pole of order $m \geq 1$ at $z = -1$. We may then express $G(z) = \frac{H(z)}{(1+z)^{m}}$ in a neighborhood of $z = -1$ for some analytic $H(z)$ with $H(-1) \neq 0$. Since $G(z)$ is an even function, $G(z) = \frac{H(-z)}{(1-z)^{m}}$ in a neighborhood of $z = 1$. But by basic singularity analysis, this implies the coefficient asymptotics

$$
	p_{2n} = [z^{2n}]G(z) = 2H(-1)n^{m-1}(1+o(1))
$$

which is clearly a contradiction for any value of $m \geq 1$. Therefore we conclude $\wt{G}(z)$ must be singular at $z = -1$.

\end{proof}

Proposition \ref{dominant_singularities} implies that both $G(z)$ and $\wt{G}(z)$ admit analytic continuations to a domain of the form $\Updelta_{0} \cap -\Updelta_{0}$ for some $\Updelta$-domain $\Updelta_{0}$. (For a set $A \subseteq \C$, $-A$ denotes the image of $A$ under the map $z \mapsto -z$.) Analyticity on such a domain is necessary to apply \cite[Theorem VI.5]{flajolet2009analytic}.

We now establish singular expansions of $G(z)$ at its dominant singularities. These follow from a minor modification of \cite[Proposition 17.16]{woess2000random}.

\begin{proposition}\label{srw_sing_exp}
	For any $d \geq 1$, the Green's function $G(z)$ has a singular expansion in a neighborhood of $z=1$ given by
		\begin{equation}\label{g_sing_exp1}
			G(z) = \begin{cases}
				f( z) + \upbeta_{d}(1 - z)^{\frac{d-2}{2}} + O\left((1 - z)^{\frac{d}{2}} \right) \quad& d \text{ odd}\\
				f( z) + \upbeta_{d}(1 - z)^{\frac{d-2}{2}}log\left(\frac{1}{1 - z} \right) + O\left((1- z)^{\frac{d}{2}}\log\left(\frac{1}{1 - z}\right) \right) \quad& d \text{ even}
			\end{cases}
		\end{equation}
	where $f(z)$ is analytic at $z =1$ and depends on $d$, and 
	
		$$
			\upbeta_{d} = 
				\begin{cases}
					\frac{\Upgamma\left(-\frac{d-2}{2}\right)}{(2\uppi)^{d/2}} \quad& d \text{ odd}\\
					\frac{(-1)^{\frac{d-2}{2}}}{\Upgamma\left(\frac{d-2}{2}\right)(2\uppi)^{d/2}} \quad& d \text{ even}
				\end{cases}
		$$
\end{proposition}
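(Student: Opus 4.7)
The plan is to derive the expansion via Fourier analysis on $\Z^d$. Using the standard representation
$$
G(z) = \frac{1}{(2\uppi)^d}\int_{[-\uppi,\uppi]^d}\frac{d\uptheta}{1 - z\,\upphi(\uptheta)},\qquad \upphi(\uptheta) = \frac{1}{d}\sum_{j=1}^d\cos\uptheta_j,
$$
the singularity at $z=1$ is localized to a neighborhood of the unique zero $\uptheta = \bm{0}$ of $1-\upphi$ on $(-\uppi,\uppi]^d$. I would partition the integration region into a small ball $B_\upepsilon$ around $\bm{0}$ and its complement. On the complement, the integrand stays bounded uniformly as $z$ ranges over some $\Updelta$-domain containing $1$, so that contribution is analytic at $z=1$ and supplies the piece $f(z)$.

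On $B_\upepsilon$, I would Taylor expand $\upphi(\uptheta) = 1 - \frac{|\uptheta|^2}{2d} + O(|\uptheta|^4)$ and change variables $\uptheta = \sqrt{2d(1-z)}\,u$. The leading singular contribution then takes the form
$$
\frac{(2d)^{d/2}}{(2\uppi)^d}(1-z)^{(d-2)/2}\int_{|u| \leq \upepsilon/\sqrt{2d(1-z)}}\frac{du}{1+|u|^2} \;+\; \text{lower order}.
$$
For odd $d$, the integrand extends to an absolutely convergent integral over $\R^d$, which evaluated in polar coordinates (and massaged through the beta-function identity) produces the constant $\upbeta_d = \Upgamma(-(d-2)/2)/(2\uppi)^{d/2}$ attached to the factor $(1-z)^{(d-2)/2}$. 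For even $d$, that same integral diverges logarithmically at infinity; against the natural cutoff $\upepsilon/\sqrt{1-z}$ this produces the advertised factor $\log(1/(1-z))$, with coefficient obtained from the same polar-coordinate computation (residue of $1/(1+|u|^2)^{d/2}$ at infinity) yielding $\upbeta_d = (-1)^{(d-2)/2}/(\Upgamma((d-2)/2)(2\uppi)^{d/2})$.

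The main obstacle is the bookkeeping required to confirm that all the lower-order pieces are genuinely absorbed into the error $O((1-z)^{d/2})$ (with a log factor when $d$ is even) on the standard scale, rather than contributing intermediate singular terms. Concretely, one must verify that the $O(|\uptheta|^4)$ Taylor remainder in $\upphi$, the boundary error from truncating at radius $\upepsilon$, and the error in replacing $1 - z\upphi(\uptheta)$ by $(1-z) + z|\uptheta|^2/(2d)$ on $B_\upepsilon$ all sit at the subleading order. Since this is exactly the content of \cite[Proposition 17.16]{woess2000random} for aperiodic random walks on $\Z^d$, I would take the shortcut indicated in the text and quote that result, after noting the only modification required by our setup: the SRW is periodic with period $2$, so the zero of $1-\upphi$ at $\uptheta = (\uppi,\ldots,\uppi)$ produces an analogous independent singular contribution at $z=-1$ but is inert for the local analysis at $z = 1$.
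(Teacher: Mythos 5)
Your Fourier-analytic route is a plausible alternative to the paper's, and both you and the paper end up invoking \cite[Proposition 17.16]{woess2000random}. But the paper's argument pivots at a point you skip over: the cited Woess result supplies only the \emph{form} of the expansion, namely $G(z) = f(z) + g(z)\bm{h}_{-\frac{d-2}{2},b}(z)$ with $g(1) \neq 0$, without giving the value of $g(1)$. The paper then determines $g(1) = \upbeta_d$ not by evaluating any integral, but by matching the coefficient asymptotics implied by the expansion against the known local limit theorem for $p_{2n}$. If you ``take the shortcut'' and quote Woess, you still owe an argument for the explicit constant; your proposal does not flag this, and treats periodicity as the only modification needed.

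There is also a concrete error in your Fourier sketch. After the change of variables you arrive at $(1-z)^{(d-2)/2}\int_{|u|\le \upepsilon/\sqrt{2d(1-z)}}\frac{du}{1+|u|^2}$ and claim that for odd $d$ this integral extends to an absolutely convergent integral over $\R^d$. In polar coordinates the integrand behaves like $r^{d-3}$ at infinity, so $\int_{\R^d}\frac{du}{1+|u|^2}$ converges only for $d=1$; for $d\ge 3$ it diverges polynomially, and for $d=2$ logarithmically. For $d\ge 3$ the polynomial divergence of the truncated integral (of order $R^{d-2}\sim (1-z)^{-(d-2)/2}$) cancels the prefactor $(1-z)^{(d-2)/2}$ and produces the \emph{analytic} part $f$; the coefficient $\upbeta_d$ of $(1-z)^{(d-2)/2}$ is a subleading correction that must be extracted by peeling off polynomial terms (or, equivalently, by analytic continuation of the beta-function identity you allude to). The appearance of $\Upgamma$ at negative half-integers in $\upbeta_d$ is the signature of this continuation; a literal convergent integral would not produce it. Similarly, your ``logarithmic divergence'' for even $d$ is accurate only for $d=2$. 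None of these issues are fatal — the Fourier route is what lies behind Woess's proof — but as written the sketch proves the claim only for $d=1,2$ and leaves a genuine gap for $d\ge 3$. The paper's LLT-matching trick sidesteps all of this and is worth adopting.
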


In terms of the standard scale (\ref{standard_scale}), we may express (\ref{g_sing_exp1}) more compactly as 

$$
	G(z) = f(z) + \upbeta_{d}\bm{h}_{-\frac{d-2}{2},b}(z) + O\left(\bm{h}_{-\frac{d}{2},b}(z) \right)
$$

where $b = (d+1)\mod 2$. Since $G(z)$ is an even function, we also get the singular expansion $G(z) = f(-z) + \upbeta_{d}\bm{h}_{-\frac{d-2}{2},b}(-z) + O\left(\bm{h}_{-\frac{d}{2},b}(-z) \right)$ in a neighborhood of $z = -1$.

\begin{proof}
The cited result from \cite{woess2000random} gives that $G(z)$ has a singular expansion in a neighborhood of $z=1$ of the form $f(z) + g(z)\bm{h}_{-\frac{d-2}{2},b}(z)$, where $f(z)$ and $g(z)$ are analytic functions which depend on $d$, and $g(1) \neq 0$. By expanding $g(z)$ as a power series $\sum_{n=0}^{\infty}g_{n}(1-z)^{n}$ at $z = 1$, we see that $g(z) = g_{0} + O(1-z)$ as $z \to 1$ with $g_{0} \neq 0$, so 

$$
	G(z) = f(z) + g_{0}\bm{h}_{-\frac{d-2}{2},b}(z) + O\left(\bm{h}_{-\frac{d}{2},b}(z)\right).
$$

The symmetric expansion at $z = - 1$ is given by 
$$
	G(z) = f(-z) + g_{0}\bm{h}_{-\frac{d-2}{2}, b}(-z) + O\left(\bm{h}_{-\frac{d}{2},b}(-z)\right).
$$

To complete the proof we verify that $g_{0} = \upbeta_{d}.$ The local limit theorem for the simple random walk on $\Z^{d}$ (see, for instance, \cite[Theorem 2.1.3]{lawler2010random}) gives $p_{2n} = [z^{2n}]G(z) =(1+o(1)) \frac{2}{(4\uppi n)^{d/2}}$. On the other hand, using (\ref{standard_scale_asymp1}) and (\ref{standard_scale_asymp2}) along with \cite[Theorem VI.5]{flajolet2009analytic}, singularity analysis of the expansions for $G(z)$ given above implies $p_{2n} = (1+o(1)) \frac{C_{d}}{(2n)^{d/2}}$ where

$$
	C_{d} = 
		\begin{cases}
			\frac{2g_{0}}{\Upgamma\left(-\frac{d-2}{2} \right)} \quad& d \text{ odd}\\
			(-1)^{\frac{d-2}{2}}\cdot 2g_{0}\Upgamma\left(\frac{d-2}{2}\right) \quad& d \text{ even}
		\end{cases}
$$

We must then have $\frac{C_{d}}{2^{d/2}} = \frac{2}{(4\uppi)^{d/2}}$. Solving this for $g_{0}$ gives 

$$
	g_{0} = 
		\begin{cases}
			\frac{\Upgamma\left(-\frac{d-2}{2}\right)}{(2\uppi)^{d/2}} \quad& d \text{ odd}\\
			\frac{(-1)^{\frac{d-2}{2}}}{\Upgamma\left(\frac{d-2}{2}\right)(2\uppi)^{d/2}} \quad& d \text{ even}
		\end{cases}
$$

That is, $g_{0} = \upbeta_{d}$.

\end{proof}

Using Proposition \ref{srw_sing_exp} along with the relation (\ref{gf0}), we can obtain singular expansions for $\wt{G}(z)$ at its dominant singularities. 

\begin{proposition}\label{prw_sing_exp}

	For all $d \geq 1$, the Green's function $\wt{G}(z)$ has a singular expansion in a neighborhood of $z = 1$ of the form

	$$
		\wt{G}(z) = \wt{f}_{+}(z) + \upbeta_{d}\bm{h}_{-\frac{d-2}{2},b}(z) + O\left(\bm{h}_{-\frac{d}{2},b}(z) \right)
	$$
	
	where $\wt{f}_{+}(z)$ is analytic at $z = 1$ and depends on $d$, and $b=(d+1)\mod 2$.
	
	\vspace{0.25cm}
	
	For $d = 1,2$, in a neighborhood of $z = -1$ we have
	
	$$
		\wt{G}(z) = \wt{f}_{-}(-z) + O\left(\bm{h}_{\frac{d-2}{2},-b}(-z) \right),
	$$
	
	and for $d \geq 3$, 
	
	$$
		\wt{G}(z) = \wt{f}_{-}(-z) + \frac{\upbeta_{d}}{(4\upalpha_{d}-1)^{2}}\bm{h}_{-\frac{d-2}{2},b}(-z) + O\left(\bm{h}_{-\frac{d}{2},b}(-z) \right)
	$$
	
	where $\wt{f}_{-}(z)$ is analytic at $z = 1$ and depends on $d$, and $\upalpha_{d}$ is the expected number visits of the simple random walk  $(X_{n})$ to the origin when $X_{0} = \bm{0}$.

\end{proposition}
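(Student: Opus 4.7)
The plan is to substitute the singular expansion of $G(z)$ from Proposition \ref{srw_sing_exp} into the functional identity $\wt{G}(z) = \big[1 - (1-2z)G(z)\big]\big/\big[1 - (1-z)^{2}G(z)\big]$ of Proposition \ref{gf_prop} and extract the singular expansion of $\wt{G}(z)$ at each of the two dominant singularities $z = \pm 1$. Throughout, let $H(z) := \bm{h}_{-(d-2)/2, b}(z)$ and $E(z) := \bm{h}_{-d/2, b}(z)$ denote the leading singular function and the error scale appearing in the expansion of $G$. The qualitative distinction between the two singularities is that at $z = 1$ the $(1-z)^{2}$ factor in the denominator kills the singularity of $G$, whereas at $z = -1$ the factors $(1-2z)$ and $(1-z)^{2}$ take the nonzero constant values $3$ and $4$, so the singularity of $G$ passes through to both numerator and denominator.

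First I will handle $z = 1$. Using the basic absorption rule $(1-z)^{k}H(z) = \bm{h}_{-(d-2+2k)/2, b}(z) = O(E(z))$ for $k \geq 1$, which follows from the multiplicative structure of the standard scale, the denominator reduces to $D(z) = B_{+}(z) + O(E(z))$ with $B_{+}(z) := 1 - (1-z)^{2}f(z)$ analytic at $z = 1$ and $B_{+}(1) = 1$, and the numerator to $N(z) = A_{+}(z) + \upbeta_{d}H(z) + O(E(z))$ with $A_{+}(z) := 1 + f(z) - 2(1-z)f(z)$ analytic. Since $B_{+}$ is bounded away from $0$ on a $\Updelta$-domain at $1$, the reciprocal $1/D(z)$ itself has an expansion of the form $1/B_{+}(z) + O(E(z))$, and multiplying out yields the claimed form for $\wt{G}(z)$ near $z = 1$ with $\wt{f}_{+}(z) := A_{+}(z)/B_{+}(z)$.

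For $z = -1$, evenness of $G$ gives $G(z) = f(-z) + \upbeta_{d}H(-z) + O(E(-z))$. Expanding $(1-2z) = 3 - 2(z+1)$ and $(1-z)^{2} = 4 - 4(z+1) + (z+1)^{2}$ around $-1$ and again absorbing the $(z+1)^{k}H(-z)$ terms, I obtain $N(z) = A_{-}(z) - 3\upbeta_{d}H(-z) + O(E(-z))$ and $D(z) = B_{-}(z) - 4\upbeta_{d}H(-z) + O(E(-z))$ for analogous analytic $A_{-}(z) := 1-(1-2z)f(-z)$ and $B_{-}(z) := 1-(1-z)^{2}f(-z)$. For $d \geq 3$ the walk is transient, so $G(-1) = G(1) = \upalpha_{d}$ is finite (the equality uses $p_{n} = 0$ for $n$ odd); then $A_{-}(-1) = 1 - 3\upalpha_{d}$ and $B_{-}(-1) = 1 - 4\upalpha_{d}$, and expanding $N/D = N \cdot [1/B_{-} + 4\upbeta_{d}H(-z)/B_{-}^{2} + \cdots]$ produces a leading singular coefficient $[-3B_{-}(-1) + 4A_{-}(-1)]/B_{-}(-1)^{2}$ on $\upbeta_{d}H(-z)$, which collapses via the identity $-3(1-4\upalpha_{d}) + 4(1-3\upalpha_{d}) = 1$ to the stated $1/(4\upalpha_{d}-1)^{2}$. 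For $d \in \{1, 2\}$ the walk is recurrent, so $G(-1) = \infty$ and both $N(z)$ and $D(z)$ diverge; in this regime I will factor $\upbeta_{d}H(-z)$ from both before expanding, after which the dominant terms produce the finite limit $3/4$ and a next-order correction of size $1/H(-z) = \bm{h}_{(d-2)/2, -b}(-z)$.

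The main obstacle will be the careful accounting of error terms through the products, reciprocals, and implicit geometric series expansions involved: cross terms such as $H \cdot E$, higher-order corrections from $1/(1 + \text{small})$, and the factors $(1-z)^{k}$ or $(z+1)^{k}$ multiplying singular pieces must each be verified to stay within the claimed $O(E)$ (respectively $O(\bm{h}_{(d-2)/2, -b})$) bound, and the multiplicative closure of the standard scale is used repeatedly. The $d \in \{1,2\}$ subcase at $z = -1$ is additionally subtle because a direct substitution produces an indeterminate $\infty/\infty$, so the divergent part must be separated out by hand before the ratio can be analyzed term by term.
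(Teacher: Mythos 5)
Your proposal is correct and follows essentially the same route as the paper: substitute the singular expansion of $G$ from Proposition~\ref{srw_sing_exp} into the functional identity~(\ref{gf0}), absorb factors like $(1-z)^{k}\bm{h}_{-(d-2)/2,b}$ into the error scale via the multiplicative closure of the standard scale, and expand $1/D$ geometrically; at $z=-1$ the recurrent ($s\to\infty$, factor out the divergent piece) and transient ($s\to 0$, Taylor-expand the reciprocal) cases split exactly as in the paper, and your identity $-3B_{-}(-1)+4A_{-}(-1)=-3(1-4\upalpha_{d})+4(1-3\upalpha_{d})=1$ is precisely the computation the paper performs. The only organizational differences are cosmetic: the paper first rewrites $\wt{G}=1+G\cdot z^{2}/(1-(1-z)^{2}G)$ at $z=1$ and passes to $\widehat{G}(z)=\wt{G}(-z)$ at $z=-1$ rather than carrying $N/D$ directly, and it names its analytic bookkeeping functions $a_{1},\dots,a_{5}$ instead of $A_{\pm},B_{\pm}$; your $\wt{f}_{+}=A_{+}/B_{+}$ agrees with the paper's $1+f(1+a_{2})$ after simplification.
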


We see from Propositions \ref{srw_sing_exp} and \ref{prw_sing_exp} that $G(z)$ and $\wt{G}(z)$ have nearly identical expansions at the singularity $z = 1$. It is only at $z = -1$ that we observe differing behaviors. 

\begin{proof}
	Throughout the proof, we use $a_{1}(z), a_{2}(z), a_{3}(z),\dots$ to denote ``utility functions" which condense some computations. Such a function $a_{j}(z)$ will always be (manifestly) analytic at $z = 1$. 
	
	We begin with the expansion at $z =1$. With $f(z)$ as in (\ref{g_sing_exp1}), define $s(z) = G(z) - f(z)$ in a neighborhood of $z = 1$. We also define $a_{1}(z) =1- (1-z)^{2}f(z)$. Note that $a_{1}(1) = 1$, so we may assume that we work in a small enough neighborhood of $z=1$ so that $a_{1}(z) \neq 0$. 
	
	From (\ref{gf0}),
	
	\begin{align*}
		\wt{G}(z)  =& \frac{1-(1-2z)G(z)}{1-(1-z)^{2}G(z)} \\
		=& 1 + G(z)\left(\frac{z^{2}}{1 - (1-z)^{2}G(z)}\right)\\
		=&1 + G(z)\left(\frac{z^{2}}{a_{1}(z) - (1-z)^{2}s(z)} \right)\\
		=&1 + G(z)\left(\frac{z^{2}}{a_{1}(z)} \cdot \frac{1}{1 - (1-z)^{2}s(z)/a_{1}(z)}\right)
	\end{align*}

	Note that $(1-z)^{2}s(z) = O(\bm{h}_{-\frac{d+2}{2},b}(z)) \to 0$ as $z \to 1$ for any $d \geq 1$. Hence the ratio $|(1-z)^{2}s(z)| / |a_{1}(z)|$ is less than $1$ for $z$ sufficiently near $1$, and we may write
	
	\begin{equation}\label{sing_exp1}
		\frac{z^{2}}{a_{1}(z)} \cdot \frac{1}{1 - (1-z)^{2}s(z)/a_{1}(z)} =\frac{z^{2}}{a_{1}(z)}\left(1 + O\left(\bm{h}_{-\frac{d+2}{2},b}(z) \right)\right)
	\end{equation}

	Since $a_{1}(z)$ is analytic and $a_{1}(z) = 1$, we can write $\frac{z^{2}}{a_{1}(z)} = 1 + a_{2}(z)$ where $a_{2}(z)$ is analytic and satisfies $a_{2}(z) = O(1-z)$ as $z \to 1$. Thus (\ref{sing_exp1}) is $1 + a_{2}(z) + O\left(\bm{h}_{-\frac{d+2}{2},b}(z) \right)$. Using Proposition \ref{srw_sing_exp}, we get the singular expansion
	
		\begin{align*}
			\wt{G}(z) =& 1 +G(z)\left(1 + a_{2}(z) + O\left(\bm{h}_{-\frac{d+2}{2},b}(z) \right)\right)\\
			=&\wt{f}_{+}(z) + \upbeta_{d}\bm{h}_{-\frac{d-2}{2},b}(z) + O\left(\bm{h}_{-\frac{d}{2},b}(z)\right)
		\end{align*}
		
	where $\wt{f}_{+}(z) = 1 + f(z)\left(1 + a_{2}(z)\right)$ is analytic at $z = 1$ and depends on $d$.
	
	Now we move to the expansion at $z = -1$. Here we must consider the recurrent ($d \leq 2$) and transient ($d \geq 3$) cases separately. We begin with the former. Since $G(z)$ is an even function, to obtain a singular expansion of $\wt{G}(z) = \frac{1-(1-2z)G(z)}{1-(1-z)^{2}G(z)}$ at $z = -1$, we may instead find a singular expansion for $\widehat{G}(z) = \wt{G}(-z) = \frac{1 - (1+2z)G(z)}{1 - (1+z)^{2}G(z)}$ at $z = 1$. 
	
	Let $a_{3}(z) = 1-(1+2z)f(z)$, $a_{4}(z) = 1 - (1+z)^{2}f(z)$, and $a_{5}(z) = \frac{a_{4}(z)}{(1+z)^{2}}$. Since for $d \leq 2$ we have $s(z) \to \infty$ as $z \to 1$, we may take a small enough neighborhood of $z = 1$ so that $s(z) \neq 0$. Then
	
	\begin{align*}
		\widehat{G}(z) =& \frac{1-(1+2z)(f(z) + s(z))}{1 - (1+z)^{2}(f(z) + s(z))} \\
		=& \frac{a_{3}(z) - (1+2z)s(z)}{a_{4}(z) - (1+z)^{2}s(z)}\\
		=& \frac{1+2z - \frac{a_{3}(z)}{s(z)}}{(1+z)^{2}} \cdot \frac{1}{1 - \frac{a_{5}(z)}{s(z)}}.
	\end{align*}
	
	We can expand $\frac{1}{1-a_{5}(z)/s(z)} = 1 + O(s(z)^{-1})$, and thus in all we get 
	
	$$
		\widehat{G}(z) = \frac{1+2z}{(1+z)^{2}} + O(s(z)^{-1})
	$$
	
	near $z = 1$. We have $s(z) = \Uptheta(\bm{h}_{-\frac{d-2}{2},b}(z))$, so $s(z)^{-1} = O(\bm{h}_{\frac{d-2}{2},-b}(z))$. This completes the $d \leq 2$ case.
	
	Now let $d \geq 3$. In this case, $G(z)$ is defined at $z = 1$, where we have $G(1) = f(1) = \upalpha_{d}$, with $\upalpha_{d}$ the expected number of visits of the walk $(X_{n})$ to the origin when $X_{0} = \bm{0}$. Clearly we have $\upalpha_{d} \geq 1$, and therefore $a_{4}(1) = 1-4\upalpha_{d} < 0$. We require this observation in order to divide by $a_{4}(z)$ in a neighborhood of $z=1$ in the following computation:
	
	\begin{align*}
		\widehat{G}(z) =& \frac{a_{3}(z) - (1+2z)s(z)}{a_{4}(z) - (1+z)^{2}s(z)}\\
		=& \frac{a_{3}(z) - (1+2z)s(z)}{a_{4}(z)}\cdot\frac{1}{1-\frac{s(z)}{a_{5}(z)}}
	\end{align*}
	
	Since $s(z) \to 0$ as $z \to 1$ for $d \geq 3$, in a neighborhood of $z=1$ we have 
	
	$$
		\frac{1}{1-s(z)/a_{5}(z)} = 1 + \frac{s(z)}{a_{5}(z)} + O(s(z)^{2})
	$$
	
	$$
		\quad = 1 + \frac{(1+z)^{2}s(z)}{a_{4}(z)} + O(s(z)^{2}).
	$$
	
	This yields
	
	$$
		\widehat{G}(z) =\frac{a_{3}(z)}{a_{4}(z)} + \left(\frac{(1+z)^{2}a_{3}(z)}{a_{4}(z)^{2}} - \frac{1+2z}{a_{4}(z)} \right) s(z) + O(s(z)^{2}).
	$$
	
	To finish, we note that $\frac{(1+z)^{2}a_{3}(z)}{a_{4}(z)^{2}} - \frac{1+2z}{a_{4}(z)}$ is analytic at $z = 1$, where it equals
	
	$$
		\frac{4a_{3}(1)}{a_{4}(1)^{2}} - \frac{3}{a_{4}(1)} = \frac{4(1-3\upalpha_{d})}{(1-4\upalpha_{d})^{2}} - \frac{3}{1-4\upalpha_{d}} = \frac{1}{(4\upalpha_{d}-1)^{2}}.
	$$
	
	Expanding this function as $\frac{1}{(4\upalpha_{d} -1)^{2}} + O(1-z)$ near $z = 1$ and using the fact that $s(z) = \upbeta_{d}\bm{h}_{-\frac{d-2}{2},b}(z) + O\left(\bm{h}_{-\frac{d}{2},b}(z)\right)$, we finally get
	
	$$
		\widehat{G}(z) = \wt{f}_{-}(z) + \frac{\upbeta_{d}}{(4\upalpha_{d}-1)^{2}}\bm{h}_{-\frac{d-2}{2},b}(z) + O\left(\bm{h}_{-\frac{d}{2},b}(z)\right) 
	$$
	
	where $\wt{f}_{-}(z) = \frac{a_{3}(z)}{a_{4}(z)}.$  
	
\end{proof}

The singular expansions in Proposition \ref{prw_sing_exp}, together with \cite[Theorem VI.5]{flajolet2009analytic} and the formulae (\ref{standard_scale_asymp1})-(\ref{standard_scale_asymp3}) immediately imply the following coefficient asymptotics for $\wt{p}_{n}$.

\begin{corollary}\label{prw_asymp}
	For $d = 1$,
	
	$$
		\wt{p}_{n} = \frac{1}{(2\uppi n)^{1/2}}\left(1 + O\left( \frac{1}{n} \right) \right).
	$$
	
	For $d = 2$, 
	
	$$
		\wt{p}_{n} = \frac{1}{2\uppi n}\left(1 + O\left(\frac{1}{\log^{2}n}\right) \right)
	$$
	
	For $d \geq 3$
	$$
		\wt{p}_{n} = \left(1 + \frac{(-1)^{n}}{(4\upalpha_{d} - 1)^{2}} \right)\frac{1}{(2\uppi n)^{d/2}}\left(1 + O\left( \frac{1}{n}\right) \right)
	$$
\end{corollary}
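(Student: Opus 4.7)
The plan is to apply singularity analysis in the sense of Flajolet--Sedgwick directly to the two-singularity expansion of $\wt{G}(z)$ obtained in Proposition \ref{prw_sing_exp}. Since Proposition \ref{dominant_singularities} already locates the dominant singularities at $z = \pm 1$ and provides a joint $\Updelta$-domain of analyticity (of the form $\Updelta_{0} \cap -\Updelta_{0}$), the hypotheses of \cite[Theorem VI.5]{flajolet2009analytic} are met, and the coefficient $\wt{p}_{n} = [z^{n}]\wt{G}(z)$ equals the sum of contributions coming from each singular expansion, plus an error set by the listed $O$-terms.

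First I would dispose of the analytic parts $\wt{f}_{\pm}$ in the expansions, which contribute only exponentially decaying corrections and are absorbed into every error term below. I would then collect the contribution of each standard-scale function $\bm{h}_{a,b}(\pm z)$ via (\ref{standard_scale_asymp1})--(\ref{standard_scale_asymp3}), using the elementary identity $[z^{n}]\bm{\phi}(-z) = (-1)^{n}[z^{n}]\bm{\phi}(z)$ to handle the $z = -1$ expansion (this is precisely where the $(-1)^{n}$ factor in the $d \geq 3$ case originates).

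The contribution from $z = 1$ is the same in every dimension: substituting $\upbeta_{d}$ from Proposition \ref{srw_sing_exp} and applying either (\ref{standard_scale_asymp1}) (odd $d$) or (\ref{standard_scale_asymp2}) (even $d$) to $\upbeta_{d}\bm{h}_{-(d-2)/2,b}(z)$ yields $\frac{1}{(2\uppi n)^{d/2}}(1+O(1/n))$, with the $O\bigl(\bm{h}_{-d/2,b}(z)\bigr)$ error contributing $O(n^{-d/2-1})$, i.e.\ the relative $O(1/n)$. The contribution from $z=-1$ is then analyzed by cases: for $d=1$ the only term is $O(\bm{h}_{-1/2,0}(-z))$, giving $O(n^{-3/2})$, absorbed into the $O(1/n)$ relative error; for $d=2$ the term $O(\bm{h}_{0,-1}(-z))$ produces $O(1/(n\log^{2}n))$ by (\ref{standard_scale_asymp3}), which is exactly the stated relative error; for $d \geq 3$ the term $\frac{\upbeta_{d}}{(4\upalpha_{d}-1)^{2}}\bm{h}_{-(d-2)/2,b}(-z)$ contributes $(-1)^{n}\frac{1}{(4\upalpha_{d}-1)^{2}}\frac{1}{(2\uppi n)^{d/2}}(1+O(1/n))$ after the same computation that gave the $z=1$ contribution. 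Adding the two dominant contributions produces the formulas in each case.

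The main delicacy is not in any single step but in the bookkeeping: verifying that for each of the three regimes $d=1$, $d=2$, $d\geq 3$ the correct case of (\ref{standard_scale_asymp1})--(\ref{standard_scale_asymp3}) applies to each standard-scale function occurring in Proposition \ref{prw_sing_exp} (tracking whether $a = -(d-2)/2$ lies in $\Z_{\leq 0}$ or not, and whether $b = (d+1)\bmod 2$ equals $0$ or $1$), and that the $O$-bounds propagate uniformly through the joint $\Updelta$-domain so that Theorem VI.5 can be invoked. Once those checks are made, the result is immediate.
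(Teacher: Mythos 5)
Your proposal is correct and follows exactly the approach the paper intends: the paper's own proof is the single sentence preceding the corollary, pointing to Proposition \ref{prw_sing_exp}, \cite[Theorem VI.5]{flajolet2009analytic}, and the formulae (\ref{standard_scale_asymp1})--(\ref{standard_scale_asymp3}), and you have simply supplied the routine bookkeeping it leaves implicit (which singular expansion and which coefficient formula applies in each regime, the $(-1)^{n}$ from $[z^{n}]\bm{h}(-z)$, and the absorption of sub-dominant errors). No gaps.
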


\section{Proofs of Theorems \ref{clt} and \ref{two_norm_theorem}}\label{proofs}

In this section we prove the main theorems, starting with Theorem \ref{two_norm_theorem} and finishing with Theorem \ref{clt}, which is in effect a corollary. We find it more natural to break the proof of Theorem \ref{two_norm_theorem} into two parts. Parts (i) and (ii) of Theorem \ref{two_norm_theorem} essentially reduce to a local limit theorem for the walk $(\wt{D}_{t})$, and to prove it we simply adapt the strategy from \cite[Theorem 2.5.6]{lawler2010random}. 

\begin{proof}[Proof of Theorem \ref{two_norm_theorem} (i) and (ii)]

	By Lemma \ref{difference_lemma}, it is enough to show that both $\prob_{\bm{0}}(\wt{D}_{t} = \bm{0})$ and $\prob_{\bm{0}}(D_{t} = \bm{0})$ are both asymptotically $\frac{1}{(2\uppi t)^{d/2}}\left(1 + O\left( \updelta_{t} \right)\right)$, where $\updelta_{t} = t^{-1/2}$ when $d \neq 2$ and $\updelta_{t} = \frac{1}{\log^{2}t}$ when $d=2$.

	We have $p_{n} = \frac{2}{(2\uppi n)^{d/2}}\left(1 + O\left(\frac{1}{n}\right) \right)$ for even $n$, and $p_{n} = 0$ for odd $n$, either by the local limit theorem or singularity analysis of the expansions given in Proposition \ref{srw_sing_exp}. We also have asymptotics for $\wt{p}_{n}$ from Corollary \ref{prw_asymp}. It is easy to see that, excluding for the moment the $d=2$ case for $\wt{p}_{n}$, we may express both $p_{n}$ and $\wt{p}_{n}$ in the general form 
	
	\begin{equation}\label{gen_asymptotic}
		a_{n} = \frac{1 + (-1)^{n}c}{(2\uppi n)^{d/2}} \exp\left\{O\left( \frac{1}{n} \right) \right\}
	\end{equation} 
	
	with $c \geq 0$ constant depending on $d$. (This also holds for $d= 2$ when the error term is replaced with $\exp\left\{O\left(\frac{1}{\log^{2}n} \right) \right\}$.) By the relation (\ref{poisson_sample}), with $N_{t} \sim \text{Po}(t)$ both $\prob_{\bm{0}}(D_{t} = \bm{0})$ and $\prob_{\bm{0}}(\wt{D}_{t} = \bm{0})$ can then be expressed as a sum
	
	$$
		\sum_{n=0}^{\infty}\prob(N_{t} = n)a_{n}.
	$$
	
	We will show that the sum above is $\frac{1}{(2\uppi t)^{d/2}} + O\left(t^{-(d+1)/2}\right)$ for any sequence $(a_{n})$ with $a_{n} \in [0,1]$ for all $n$ and satisfying (\ref{gen_asymptotic}), which establishes the result when $d \neq 2$. The $d =2$ case is essentially the same, but requires a more careful handling of the error term which we defer to the end.
	
	For $t, \upepsilon > 0$, let $I_{t}^{\upepsilon} = \{n \in \Z\,:\,|n-t| \leq \upepsilon t\}$. By standard large deviations bounds, for a fixed $\upepsilon > 0$, $\prob(N_{t} \not\in I_{t}^{\upepsilon}) = O(e^{-c_{\upepsilon}t})$ as $t \to \infty$, where $c_{\upepsilon}$ is a constant which depends on $\upepsilon.$ Thus it suffices to show that there exists an $\upepsilon > 0$ so that
	
	\begin{equation}\label{epsilon_sum}
		\sum_{n \in I_{t}^{\upepsilon}}\prob(N_{t} = n)a_{n} = \frac{1}{(2\uppi t)^{d/2}} + O(t^{-(d+1)/2}).
	\end{equation}
	
	We require the following local limit theorem for the Poisson random variable $N_{t}$: if $n \in \Z$ satisfies $|n-t| \leq t/2$, then
	
	\begin{equation}\label{poisson_llt}
		\prob(N_{t} = n) = \frac{1}{\sqrt{2 \uppi  t}}e^{-\frac{(n-t)^{2}}{2t}}\exp\left\{O\left( \frac{1}{\sqrt{t}} + \frac{|n-t|^{3}}{t^{2}} \right) \right\}.
	\end{equation}

	(See \cite[Proposition 2.5.5]{lawler2010random}.) Observe that
	
	$$
		\frac{1}{n^{d/2}} = \frac{1}{(n-t + t)^{d/2}} = \frac{1}{t^{d/2}}\exp\left\{O\left( \frac{|n-t|}{t} \right)\right\}.
	$$
	
	and
	
	$$
		\frac{1}{n} = O\left(\frac{1}{t}\right)
	$$
	
	uniformly for $n \in I_{t}^{\upepsilon}$. Hence
	
	\begin{equation}\label{a_asymptotic}
		a_{n} = \frac{1+(-1)^{n}c}{(2 \uppi t)^{d/2}}\exp\left\{ O\left(\frac{|n-t|}{t} \right) \right\}.
	\end{equation}
	
	Assuming that $\upepsilon < \frac{1}{2}$ to that (\ref{poisson_llt}) may be applied, we then have for $n \in I_{t}^{\upepsilon}$: 
	
	\begin{eqnarray}
		\prob(N_{t} = n)a_{n} =\frac{1 + (-1)^{n}c}{(2 \uppi t)^{(d+1)/2}}e^{-\frac{(n-t)^{2}}{2t}}\exp\left\{O\left(\frac{1}{\sqrt{t}} + \frac{|n-t|}{t} + \frac{|n-t|^{3}}{t^{2}} \right)\right\}. \label{general_term}
	\end{eqnarray}
	
	Note that $\frac{|n-t|}{t}$ dominates $\frac{|n-t|^{3}}{t^{2}}$ until $|n-t|$ is of the order $\sqrt{t}$, at which point $\frac{|n-t|^{3}}{t^{2}}$ becomes the dominant term. Thus we may simplify the error to $\exp\left\{O\left(\frac{1}{\sqrt{t}} + \frac{|n-t|^{3}}{t^{2}}\right) \right\}$.
	
	Let $E_{t}^{\upepsilon} = I_{t}^{\upepsilon} \cap 2\Z$ and $O_{t}^{\upepsilon} = I_{t}^{\upepsilon} \setminus E_{t}^{\upepsilon}.$ So $E_{t}^{\upepsilon}$ and $O_{t}^{\upepsilon}$ are the sets of even and odd integers in $I_{t}^{\upepsilon}$, respectively. Consider the sums 
	
	\begin{align*}
		S_{I} =& \sum_{n \in I_{t}^{\upepsilon}}\frac{e^{-\frac{(n-t)^{2}}{2t}}}{\sqrt{2\uppi t}}\exp\left\{ O\left( \frac{|n-t|^{3}}{t^{2}} \right)\right\}\\
		S_{E} =&\sum_{n \in E_{t}^{\upepsilon}}\frac{e^{-\frac{(n-t)^{2}}{2t}}}{\sqrt{2\uppi t}}\exp\left\{ O\left( \frac{|n-t|^{3}}{t^{2}} \right)\right\}\\
		S_{O} =&\sum_{n \in O_{t}^{\upepsilon}}\frac{e^{-\frac{(n-t)^{2}}{2t}}}{\sqrt{2\uppi t}}\exp\left\{ O\left( \frac{|n-t|^{3}}{t^{2}} \right)\right\}
	\end{align*}
	
	The sum $\sum_{n \in I_{t}^{\upepsilon}}\prob(N_{t} = n)a_{n}$ can be expressed
	
	$$
		\frac{1}{(2\uppi t)^{d/2}}\exp\left\{ O\left(\frac{1}{\sqrt{t}} \right) \right\}\left(S_{I} + c(S_{E}-S_{O}) \right).
	$$
	
	Sums of the same form as $S_{I}$, $S_{E}$, and $S_{O}$ are approximated in \cite[proof of Theorem 2.5.6]{lawler2010random} by way of comparison with the Gaussian integral $\frac{1}{\sqrt{2\uppi}}\int_{-\infty}^{\infty}e^{-u^{2}/2}\,du = 1$. That proof can be applied with nearly no modification to approximate each of the sums $S_{I}$, $S_{E}$, and $S_{O}$, and so we simply cite the result we need: there exists an $\upepsilon > 0$ so that
	
	$$
		S_{E}, S_{O} = \frac{1}{2} + O\left(\frac{1}{\sqrt{t}} \right) \quad\text{ and } \quad S_{I} = 1 + O\left(\frac{1}{\sqrt{t}} \right)
	$$ 
	
	so that $S_{I} - c(S_{E} - S_{0}) = 1 + O\left(\frac{1}{\sqrt{t}} \right)$. Then, using $\exp\left\{O\left(\frac{1}{\sqrt{t}} \right) \right\} = 1 + O\left(\frac{1}{\sqrt{t}} \right)$, it follows that
	
	$$
		\sum_{n \in I_{t}^{\upepsilon}}\prob(N_{t} = n)a_{n} = \frac{1}{(2\uppi t)^{d/2}}\left(1 + O\left(\frac{1}{\sqrt{t}} \right) \right)
	$$
	
	as desired. This completes the proof for $d \neq 2$.
	
	We finish by treating the $d = 2$ case. As mentioned before, the only difference is with the order of the error term. In the preceding proof, this becomes relevant in the approximation (\ref{a_asymptotic}). When the error factor for $a_{n}$ is $\exp\left\{ O\left( \frac{1}{n} \right) \right\}$, we argue that $\frac{1}{n} = O\left(\frac{1}{t}\right)$, uniformly for $n \in I_{t}^{\upepsilon}$, and so we can replace $\exp\left\{O\left( \frac{1}{n} \right) \right\}$ with $\exp\left\{O\left( \frac{1}{t} \right) \right\}$. This error turns out to be small enough to be disregarded, since, for instance, it is dominated by $O\left(\frac{|n-t|}{t} \right).$
	
	When $a_{n}$ has $\exp\left\{O\left(\frac{1}{\log^{2}n}\right)\right\}$ as an error factor, we still get $\frac{1}{\log^{2}n} = O\left(\frac{1}{\log^{2}t}\right)$ uniformly for $n \in I_{t}^{\upepsilon}$. However, this term cannot be disregarded and carries through until the end of the proof. In all, it contributes a $\left(1 + O\left(\frac{1}{\log^{2}t} \right) \right)$ error factor to the approximation. 

\end{proof}

\begin{proof}[Proof of Theorem \ref{two_norm_theorem} (iii)]

Using the decomposition (\ref{noisy_heat_kernel}), we have

\begin{equation}\label{triangle_inequality}
	\begin{aligned}
		|\upeta_{t} - \E^{\upeta}[\upeta_{t}]| =& \sum_{y \in \Z^{d}}|\upeta_{t}^{y} - \E[\upeta_{t}^{y}]|\\
		=&\sum_{y \in \Z^{d}}\bigg|\sum_{x \in \Z^{d}}\upeta^{x}(\upeta_{t}(x,y) - h_{t}(x,y)) \bigg|\\
		\leq&\sum_{y \in \Z^{d}}\sum_{x \in \Z^{d}}\upeta^{x}|\upeta_{t}(x,y) - h_{t}(x,y)|\\
		=&\sum_{x \in \Z^{d}}\upeta^{x}|\upeta_{t}(x,\cdot) - h_{t}(x,\cdot)|.
	\end{aligned}
\end{equation}

Let $r = r(t) \to \infty$ as $t \to \infty$, to be specified later. Fix $x \in \Z^{d}$ and let $A_{r}(x) = \{y\,:\,|x-y| \leq \sqrt{rt}\}$. We write
	
	\begin{align*}
		|\upeta_{t}(x, \cdot) - h_{t}(x, \cdot)| =& \sum_{y \in A_{r}(x)}|\upeta_{t}(x, y) - h_{t}(x, y)| + \sum_{y \not\in A_{r}(x)}|\upeta_{t}(x, y) - h_{t}(x, y)|\\
		\leq&\sum_{y \in A_{r}(x)}|\upeta_{t}(x, y) - h_{t}(x, y)| + \sum_{y \not\in A_{r}(x)}(\upeta_{t}(x, y) + h_{t}(x, y))\\
		\leq& \sqrt{\text{Vol}(A_{r}(x))\sum_{y \in A_{r}(x)}(\upeta_{t}(x,y) - h_{t}(x,y))^{2}} + \sum_{y \not\in A_{r}(x)}(\upeta_{t}(x, y) + h_{t}(x, y))\\
		\leq& \sqrt{\text{Vol}(A_{r}(x)) \norm{\upeta_{t}(x,\cdot) - h_{t}(x,\cdot)}^{2}} +\sum_{y \not\in A_{r}(x)}(\upeta_{t}(x, y) + h_{t}(x, y)).
	\end{align*}
	
	The third line results from an application of the Cauchy-Schwarz inequality. Now we apply expectations. Noting that $\text{Vol}(A_{r}(x)) = O((r t)^{d/2})$, we get
	
	$$
		\E\left[\sqrt{\text{Vol}(A_{r}(x)) \norm{\upeta_{t}(x,\cdot) - h_{t}(x,\cdot)}^{2}} \right] \leq \sqrt{ O((r t)^{d/2})\,\E[\,\norm{\upeta_{t}(x,\cdot) - h_{t}(x,\cdot)}^{2}\,]}
	$$
	
	using Jensen's inequality to move the expectation through the square root. Since by Theorem \ref{two_norm_theorem} we have that $\E[\norm{\upeta_{t}(x,\cdot) - h_{t}(x,\cdot)}^{2}]=o(t^{-d/2})$, it is clear that we can choose $r$ growing slowly enough so that the right-hand side above is $o(1)$ as $t \to \infty.$ Note that the choice of $r$ does not depend on $x$, as $\E[\,\norm{\upeta_{t}(x,\cdot) - h_{t}(x,\cdot)}^{2} \,]= o(t^{-d/2})$ uniformly in $x$.
	
	For the other term, we have
	
	$$
		\sum_{y \not\in A_{r}(x)}(\upeta_{t}(x, y) + h_{t}(x, y)) = 2\prob_{x}(|W_{t} - W_{0}| > \sqrt{r t})
	$$
	
	where $(W_{t})$ is the continuous-time simple random walk on $\Z^{d}$ with jump rate $1/2$. The probability on the right above is easily shown to be $o(1)$ for any $r \to \infty$ (for instance, using the same method as in the proof of Proposition \ref{spectral_radius}), and again uniformly in $x$. It follows that $\E[\,|\upeta_{t}(x,\cdot)-h_{t}(x,\cdot)|\,] = o(1)$ uniformly in $x$.
	
	Now, from (\ref{triangle_inequality}) and the dominated convergence theorem, 
	
	$$
		\E^{\upeta}\left[|\upeta_{t} - \E^{\upeta}[\upeta_{t}] \right] \leq \sum_{x \in \Z^{d}}\upeta^{x}\E\left[|\upeta_{t}(x, \cdot) - h_{t}(x, \cdot)|\right].
	$$
	
	Since each summand on the right is $o(1)$ uniformly in $x$ as $t \to \infty$, we may conclude that the entire sum is $o(1)$, completing the proof.
\end{proof}

Finally, we prove the central limit theorem for $(\upeta_{t})$.

\begin{proof}[Proof of Theorem \ref{clt}]
	Let $f: \R^{d} \to \R$ be continuous and bounded. We may assume by subtracting a constant that $\int_{\R^{d}}f\,d\upnu = 0$. Let $M = \sup_{\R^{d}}|f(x)|$. For ease of notation we write $f_{t}(y) = f(y/\sqrt{t/2}).$ We have
	
	$$
		\bigg| \sum_{y \in \Z^{d}}f_{t}(y) \upeta_{t}^{y}\bigg| \leq M\cdot|\upeta_{t} - \E^{\upeta}[\upeta_{t}]| + \bigg| \sum_{y\in \Z^{d}}f_{t}(y)\E^{\upeta}[\upeta_{t}^{y}]\bigg|.
	$$
	
	The first term on the right converges in probability to $0$ by Theorem \ref{two_norm_theorem} (iii). The second term can be expanded further as
	
	\begin{eqnarray}
		\bigg| \sum_{y \in \Z^{d}}f_{t}(y) \E^{\upeta}[\upeta_{t}^{y}]\bigg| &=& \bigg| \sum_{y \in \Z^{d}}\sum_{x \in \Z^{d}}f_{t}(y)\upeta^{x}h_{t}(x,y) \bigg|\nonumber\\
		&\leq& \sum_{x \in \Z^{d}}\upeta^{x}\bigg|\sum_{y \in \Z^{d}}f_{t}(y)h_{t}(x,y)\bigg|\label{clt_bound}
	\end{eqnarray}
	
	Note (\ref{clt_bound}) is a convergent sum for all $t$, as $\big|\sum_{y \in \Z^{d}}f_{t}(y)h_{t}(x,y)\big| \leq M$ for all $x$. We recognize $\sum_{y \in \Z^{d}}f_{t}(y)h_{t}(x,y)$ as $\E_{x}[f_{t}(W_{t})]$ where, as usual, $(W_{t})$ is a continuous-time simple random walk on $\Z^{d}$ with jump rate $1/2$. This term converges to $0$ for any $x \in \Z^{d}$ by the central limit theorem. It follows that (\ref{clt_bound}) is $o(1)$ as $t \to \infty$, which finishes the proof.

\end{proof}

\section{Conclusion}\label{conclusion}
We've shown that the averaging process on $\Z^{d}$ exhibits sharp concentration around its expectation with general initial conditions, and have used this to show a central limit theorem for the process which extends results in \cite{nagahata2009central} and \cite{nagahata2010note}. We finish by giving some potential directions for future work and considering a short example.
\subsection{Future work}

Here we consider the possibility of applying the techniques of this paper to more general stochastic flows $\upeta_{t} = A_{t}\upeta_{t^{-}}$. The motivating question is: what conditions on the matrices $(A_{t})$ ensure concentration of the type given in Theorem \ref{two_norm_theorem} for the corresponding process?

Two reasonable generalizations are to allow for long-range interactions between vertices, and to allow for non-conservative flows. In terms of the matrices $(A_{t})$, the first modification means that the $A_{t}$'s may now differ from the identity $I$ on larger blocks than just $2 \times 2$ submatrices; the latter means that the $A_{t}$'s need not be stochastic. (Note that the matrices corresponding to the averaging process are stochastic and differ from the identity only on a $2 \times 2$ block.) One could also consider different local mass redistribution dynamics, as we do in the final subsection.

\subsection{Potlach process}\label{potlach}

Consider the potlach process, another random mass distribution process which was introduced in \cite{spitzer1981infinite}. In the simplest variant, each vertex in $\Z^{d}$ has an independent, rate-$1$ Poisson clock; upon a ring at $x$, vertex $x$ distributes all of its present mass evenly among its neighbors, keeping none for itself. This process was also studied under the name \textit{meteor process} in \cite{billey2015meteors} and \cite{burdzy2015meteor}, the idea being that meteors hit each vertex at rate $1$, blowing the mass onto neighboring vertices upon impact.

Suppose we begin the potlach process from a unit mass at some vertex. The expected mass distribution at time $t$ is again the distribution of a continuous-time random walk on $\Z^{d}$ started from the the initial vertex, this time with jump rate $1$. The dynamics of the two-point coupled random walks $(\wt{W}_{t}^{(1)}, \wt{W}_{t}^{(2)})$ for the potlach process can be derived using the same tools we used for the averaging process in Sections \ref{construction_duality} and \ref{discrete_rw}. One examines the difference process $(\wt{D}_{t}) = (\wt{W}_{t}^{(1)}-\wt{W}_{t}^{(2)})$ and then passes to the corresponding discrete time walk $(\wt{X}_{n})$. Transition probabilities for the walk $(\wt{X}_{n})$ for dimension $d = 1$ are given in Figure \ref{potlach_fig}.

\begin{figure}[h]
	\captionsetup{width=0.8\textwidth}
	\captionsetup{font=small}
	\centering
		\includegraphics[scale = 0.425]{./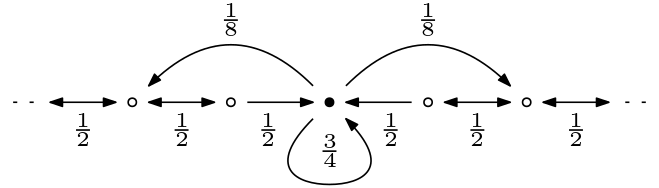}
		\caption{A representation of the discrete-time walk $(\wt{X}_{n})$ corresponding to the poltach process in dimension $d = 1$. The black vertex is the origin. The transition probabilities are those of a simple random walk except for a perturbation at the origin.}
		\label{potlach_fig}	
\end{figure}

Using the same proof method as Proposition \ref{gf_prop} (and in fact it is slightly easier in this case), one can derive the relation

$$
	\wt{G}(z) = \frac{2zG(z)}{1 - (1-z)^{2}G(z)}.
$$

Once again, the dominant singularities are $z = \pm 1$, and we can compute rough singular expansions at each of them. As $z \to 1$, $\wt{G}(z) \approx 2G(z)$, for any $d$. As $z \to -1$, $\wt{G}(z) \approx f(z) + o(G(z))$ for $d \leq 2$, and $\wt{G}(z) \approx f(z) + c_{d}G(z)$ for $d \geq 3$, where $f(z)$ is analytic at $z = -1$ and depends on $d$.

After translating back to continuous time as in the proof of Theorem \ref{two_norm_theorem}, these expansions imply that as $t \to \infty$, $\prob(\wt{D}_{t} = \bm{0}) = (2+o(1))\prob(D_{t} = \bm{0})$, and hence $\E[ \norm{\upeta_{t} - \E[\upeta_{t}]}^{2}] = \Uptheta(t^{-d/2})$. This is a strictly larger order of concentration than is exhibited by the averaging process; in particular, we cannot deduce the central limit theorem for the potlach process from the concentration alone as we did in the proof of Theorem \ref{clt}. Of course, this is not to say that the central limit theorem is false for the potlach process---indeed, it holds for $d \geq 3$ by the results in (\cite{nagahata2009central}, \cite{nagahata2010note}). 

\subsection{Acknowledgements}

We thank Xavier P\'{e}rez Gim\'{e}nez and Juanjo Ru\'{e} for useful discussions. 

\newpage

\bibliographystyle{alpha}
\bibliography{refs} 

\end{document}